\newcommand{\dd}{\mathrm{d}}
\newcommand{\id}{\operatorname{Id}}
\newcommand{\vol}{\operatorname{vol}}
\newcommand{\Iso}{\operatorname{Iso}}
\newcommand{\Ric}{\operatorname{Ric}}
\newcommand{\scal}{\operatorname{scal}}
\newcommand{\R}{\mathds R}
\newcommand{\C}{\mathds C}
\newcommand{\SO}{\mathsf{SO}}
\renewcommand{\O}{\mathsf O}
\newcommand{\GL}{\mathsf{GL}}
\newcommand{\Ss}{\mathbb{S}}
\newcommand{\Spin}{\mathsf{Spin}}
\newcommand{\Sym}{\operatorname{Sym}}
\newcommand{\re}{\operatorname{Re}\,}
\newcommand{\im}{\operatorname{Im}\,}
\newcommand{\ii}{\sqrt{-1}\,}
\newcommand{\g}{\mathrm g}
\newcommand{\Gr}{\operatorname{Gr}_2}
\newcommand{\even}{{\operatorname{even}}}
\newcommand{\odd}{{\operatorname{odd}}}
\newtheorem{theorem}{Theorem}[]
\newtheorem{lemma}[theorem]{Lemma}
\newtheorem{proposition}[theorem]{Proposition}
\newtheorem{mainthm}{\sc Theorem}
\newtheorem{maincor}[mainthm]{\sc Corollary}
\theoremstyle{definition}
\theoremstyle{remark}
\newtheorem{remark}[theorem]{Remark}
\newtheorem{example}[theorem]{Example}
\title{Sectional curvature and Weitzenb\"ock formulae}
\dedicatory{Dedicated to the memory of Marcel Berger}
\author[R. G. Bettiol]{Renato G. Bettiol}
\address{City University of New York (Lehman College) \newline
\indent Department of Mathematics  \newline
\indent 250 Bedford Park Blvd W\newline
\indent Bronx, NY, 10468, USA }
\email{r.bettiol@lehman.cuny.edu}
\author[R. A. E. Mendes]{Ricardo A. E. Mendes}
\address{University of Oklahoma\newline
\indent Department of Mathematics\newline
\indent 601 Elm Ave\newline
\indent Norman, OK, 73019-3103, USA}
\email{ricardo.mendes@ou.edu}
\numberwithin{equation}{section}
\numberwithin{theorem}{section}
\subjclass[2010]{53C20, 53C21, 20G05, 53B20, 58A14} 
\date{\today}
\begin{document}
\begin{abstract}
We establish a new algebraic characterization of sectional curvature bounds $\sec\geq k$ and $\sec\leq k$ using only curvature terms in the Weitzenb\"ock formulae for symmetric $p$-tensors. By introducing a symmetric analogue of the Kulkarni--Nomizu product, we provide a simple formula for such curvature terms. We also give an application of the Bochner technique to closed $4$-manifolds with indefinite intersection form and $\sec>0$ or $\sec\geq0$, obtaining new insight into the Hopf Conjecture, without any symmetry assumptions.
\end{abstract}

\maketitle

\vspace{-0.3cm}

\section{Introduction}

In \emph{geometric} terms, the sectional curvature of a Riemannian manifold $(M,\g)$ is the most natural generalization to higher dimensions of the Gaussian curvature of a surface, given that it controls the behavior of geodesics. However, its \emph{algebraic} features render its study much more complicated. A substantial part of this complication arises from the fact that, even at a pointwise level, $\sec\colon\Gr(T_pM)\to\R$ is a nonlinear function defined on the Grassmannian of $2$-planes in $T_pM$, a quadric variety inside the unit sphere of $\wedge^2 T_pM$, defined by the Pl\"ucker relations. The main goal of this paper is to shed further light on the algebraic nature of sectional curvature by relating it to the curvature terms in Weitzenb\"ock formulae, which are linear endomorphisms and hence computationally more accessible.

Given a Riemannian $n$-manifold $(M,\g)$, consider the vector bundle $E\to M$ associated to the frame bundle of $M$ via a representation $\rho\colon\O(n)\to \O(E)$ of the orthogonal group. 
Geometrically relevant Laplacians $\Delta$ on sections of $E$, as the Hodge Laplacian for $E=\wedge^p TM$ and the Lichnerowicz Laplacian for $E=\Sym^p TM$, are related to the connection Laplacian $\nabla^*\nabla$ via the \emph{Weitzenb\"ock formula}
\begin{equation*}
\Delta = \nabla^*\nabla + t\,\mathcal K(R,\rho),
\end{equation*}
where $t\in\R$ is a constant and $\mathcal K(R,\rho)$ is a linear endomorphism of $E$ determined by the curvature operator $R$ of $(M,\g)$ and the orthogonal representation $\rho$, see \eqref{eq:krho} and Section~\ref{sec:weitzenformulae} for details.
As observed by Hitchin~\cite{Hitchin15}, an algebraic curvature operator $R\colon\wedge^2\R^n\to\wedge^2\R^n$ is positive-semidefinite if and only if the endomorphism $\mathcal K(R,\rho)$ is positive-semidefinite for \emph{all} irreducible representations $\rho\colon\O(n)\to \O(E)$. 
Our first result is a similar algebraic characterization of sectional curvature bounds using traceless symmetric $p$-tensors, i.e., representations $\rho\colon\O(n)\to\O( \Sym^p_0\R^n)$.

\begin{mainthm}\label{mainthm:algebraicChar}
An algebraic curvature operator $R\colon\wedge^2\R^n\to\wedge^2\R^n$ has $\sec_R\geq k$, respectively $\sec_R\leq k$, if and only if $\mathcal K(R-k\id,\Sym^p_0 \R^n)$ is positive-semidefinite, respectively negative-semidefinite, for all $p\geq2$.
\end{mainthm}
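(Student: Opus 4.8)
The plan is to work purely pointwise on an algebraic curvature operator $R$ and to understand the endomorphism $\mathcal K(R,\Sym^p_0\R^n)$ on a fixed line in $\Sym^p_0\R^n$. The key observation is that the curvature term $\mathcal K(R,\rho)$ has a uniform description: using the formula \eqref{eq:krho}, for $\rho=\Sym^p_0\R^n$ and a unit vector $v\in\R^n$, I would first compute the quadratic form $\langle \mathcal K(R,\Sym^p_0\R^n)\,v^{\otimes p}, v^{\otimes p}\rangle$, or more precisely its value on the traceless projection of $v^{\otimes p}$. Since $\mathcal K(R,\rho)=\sum_{\alpha<\beta}\rho(e_\alpha\wedge e_\beta)^2$ paired against $R$ via the structure of $\wedge^2\R^n$, evaluating on decomposable symmetric tensors $v^{\otimes p}$ reduces the computation to how $\mathfrak{so}(n)$ acts on powers of $v$, which only involves the $2$-plane spanned by $v$ and each $e_\alpha$. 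The expectation is that this evaluation produces, up to positive constants depending on $p$ and $n$, a weighted sum $\sum_j c_j\,\sec_R(v,w_j)$ of sectional curvatures of planes containing $v$ (for an orthonormal basis $w_j$ of $v^\perp$), i.e. essentially a Ricci-type average in the $v$ direction — which by itself is not enough to detect $\sec\geq k$.

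To capture the full sectional curvature, the second step is to let $p\to\infty$ and localize. Replacing $R$ by $R-k\id$ shifts all sectional curvatures by $-k$, so it suffices to treat $k=0$. I would test $\mathcal K(R,\Sym^p_0\R^n)$ not on $v^{\otimes p}$ but on symmetric tensors concentrated near a single direction, e.g. of the form $(\cos\theta\, v+\sin\theta\, w)^{\otimes p}$ integrated against a sharply peaked weight, or directly on highest-weight-type vectors. As $p\to\infty$ such tensors become increasingly concentrated, and the quadratic form $\langle\mathcal K(R,\Sym^p_0\R^n)\xi,\xi\rangle$ should, after normalization, converge to $\sec_R(v,w)$ for the relevant $2$-plane $\mathrm{span}\{v,w\}$ — the higher-order terms in $p$ that encode Ricci averages get dominated by the leading term that isolates one plane. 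This is the mechanism by which the single scalar condition "positive-semidefinite for all $p$" recovers the genuinely nonlinear condition "$\sec\geq 0$": no finite $p$ suffices (matching the fact that Ricci-type conditions are strictly weaker), but the limit does. Conversely, if $\sec_R\geq 0$ then one must show $\mathcal K(R,\Sym^p_0\R^n)\succeq 0$ for every $p$; here I would write a general $\xi\in\Sym^p_0\R^n$ via its expansion in powers $v^{\otimes p}$ (the symmetric tensors are spanned by such, by polarization) and argue that the resulting quadratic form is a superposition, with nonnegative coefficients, of sectional curvatures — perhaps most cleanly by diagonalizing $R$ on $\wedge^2\R^n$ and checking the statement on the model operators $e_\alpha\wedge e_\beta\otimes e_\alpha\wedge e_\beta$, then invoking the later structural results (the symmetric Kulkarni–Nomizu product) to reduce a general $\sec\geq 0$ operator to a nonnegative combination of such.

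The main obstacle I anticipate is the explicit computation of $\langle\mathcal K(R,\Sym^p_0\R^n)\xi,\xi\rangle$ in a form clean enough to take the $p\to\infty$ limit and extract exactly $\sec_R$ with controlled error — in particular, handling the traceless projection $\Sym^p_0$ versus full $\Sym^p$, since subtracting traces introduces lower-order-in-$p$ corrections that must be shown not to interfere with the leading term. A secondary technical point is making rigorous the claim that the span of the tensors $v^{\otimes p}$, $v\in\R^n$, together with the action of $\mathcal K(R,\rho)$, is rich enough that \emph{semidefiniteness on this spanning set forces semidefiniteness everywhere} — this is automatic for a quadratic form once one knows the values on a spanning set of \emph{vectors}, but the subtlety is that $v^{\otimes p}$ are not orthogonal, so the off-diagonal pairings $\langle\mathcal K(R,\rho)v^{\otimes p},w^{\otimes p}\rangle$ must also be identified with curvature expressions. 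I expect the symmetric Kulkarni–Nomizu product introduced later in the paper to be precisely the tool that organizes these pairings and makes both directions of the equivalence transparent.
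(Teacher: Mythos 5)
Your high-level intuition --- that the diagonal values of $\mathcal K(R,\Sym^p_0\R^n)$ are nonnegatively weighted averages of sectional curvatures, and that one must let $p\to\infty$ and concentrate near a single direction to isolate one $2$-plane --- is exactly the mechanism behind the theorem, and it is also the skeleton of the paper's proof. However, both directions have genuine gaps as you have set them up. For the direction $\sec_R\geq0\Rightarrow\mathcal K(R,\Sym^p_0\R^n)\succeq0$, your concrete fallback (diagonalize $R$ on $\wedge^2\R^n$, verify the claim on the model operators $(e_\alpha\wedge e_\beta)\otimes(e_\alpha\wedge e_\beta)$, and express $R$ as a nonnegative combination of these) cannot work: an operator with $\sec_R\geq0$ is in general \emph{not} a nonnegative combination of decomposable rank-one operators --- that would essentially force $R$ to be positive-semidefinite modulo $\wedge^4\R^n$, a strictly stronger condition than $\sec_R\geq0$, and the whole point of the theorem is to go beyond that case. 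The missing engine is the identity
\begin{equation*}
\left\langle\mathcal K\big(R,\Sym^p_0\R^n\big)\varphi,\varphi\right\rangle=c_{p,n}\int_{S^{n-1}}R\big(x,\nabla\varphi(x),x,\nabla\varphi(x)\big)\,\dd x,
\end{equation*}
which follows from the pointwise formula $(\dd\rho(X)\varphi)(x)=\langle x\wedge\nabla\varphi(x),X\rangle$ together with Schur's Lemma (all $\O(n)$-invariant inner products on the irreducible $\Sym^p_0\R^n$ are proportional, so the abstract pairing may be replaced by the $L^2(S^{n-1})$ pairing). This exhibits the quadratic form as an integral of $\|x\wedge\nabla\varphi(x)\|^2\sec_R\!\big(x\wedge\nabla\varphi(x)\big)$ and gives the forward implication immediately; nothing in your proposal produces this or an equivalent positivity certificate.

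For the converse, the concentration idea is right, but the step you defer to a ``sharply peaked weight'' is precisely where the difficulty sits. To make the spherical integral negative when $\sec_R(e_1\wedge e_2)<0$ you need a test function supported near $e_1$ whose gradient is nearly parallel to $e_2$ throughout its support; a smooth cutoff times $x_2$ fails because the gradient of the cutoff contributes at the same order as $e_2$, and it is not clear that your superpositions of $(\cos\theta\,v+\sin\theta\,w)^{\otimes p}$ avoid this. The paper resolves it with the non-smooth function $f_\varepsilon(x)=\max\{0,\varepsilon^2-|x_2|-\|x-e_1\|^2\}$, whose gradient equals $\pm e_2-2(x-e_1)$ where defined, followed by a $W^{1,2}$-density argument to return to smooth functions and then to polynomials. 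You would also need the bridge from positive-semidefiniteness of the individual blocks $\mathcal K(R,\Sym^k_0\R^n)$ to nonnegativity of the spherical integral for an \emph{arbitrary} (non-homogeneous, non-harmonic) polynomial: the paper handles this by splitting into even and odd parts and showing that the cross terms between distinct irreducible components of $\Sym^p\R^n$ integrate to zero by Schur's Lemma. This is exactly the issue you flag about off-diagonal pairings of the non-orthogonal tensors $v^{\otimes p}$ and about trace corrections, but your proposal identifies the problem without resolving it.
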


The fact that $\sec_R\leq0$ implies negative-semidefiniteness of $\mathcal K(R,\Sym^p_0\R^n)$ for all $p\geq2$ was previously obtained in \cite{HMS15}, see also~\cite{DS10}.
 
An algebraic characterization of $\sec_R\geq0$ is arguably more relevant than the analogous characterization of $R$ being positive-semidefinite for two key reasons. First, pointwise, $R$ is a linear endomorphism, hence its positive-semidefiniteness is already characterized by Sylvester's criterion. Second, globally, closed manifolds that admit metrics with positive-semidefinite $R$ have been classified~\cite[Thm.~1.13]{wilking-survey}, see also \cite{bw}, while several questions about manifolds that admit metrics with $\sec\geq0$ remain unanswered, see \cite{wilking-survey,bible,ziller-mexico} for surveys.

In order to exploit Theorem~\ref{mainthm:algebraicChar} and gain a deeper algebraic understanding of sectional curvature bounds, it is crucial to (effectively) compute the curvature terms $\mathcal K(R,\Sym^p_0\R^n)$. Our second result offers a procedure that relies on a \emph{symmetric version} $\ovee$ of the Kulkarni--Nomizu product, which we introduce in Section~\ref{sec:KNalgebras}, and the decomposition into $\O(n)$-irreducible components of
 arbitrary elements $R\in\Sym^2(\wedge^2\R^n)$, which are called \emph{algebraic modified curvature operators}.
Namely, $R=R_{\mathcal U}+R_{\mathcal L}+R_{\mathcal W}+R_{\wedge^4}\in\Sym^2(\wedge^2\R^n)$, where
$R_\mathcal U$ is the scalar curvature part, $R_\mathcal L$ is the traceless Ricci part, $R_\mathcal W$ is the Weyl tensor part, and $R_{\wedge^4}$, which vanishes if and only if $R$ satisfies the first Bianchi identity.
For completeness and comparison, we also state how curvature terms $\mathcal K(R,\wedge^p \R^n)$ for the representations $\rho\colon\O(n)\to\O(\wedge^p\R^n)$ can be computed using the classical Kulkarni--Nomizu product $\owedge$, a result obtained by Labbi~\cite[Prop.~4.2]{Labbi15} in the \emph{unmodified} case $R_{\wedge^4}=0$.

\begin{mainthm}\label{mainthm:KRWedgeSymKN}
If $R\in\Sym^2(\wedge^2\R^n)$ is an algebraic modified curvature operator, then
\begin{equation*}
\begin{aligned}
\mathcal K\big(R,\Sym^p_0 \R^n\big)&=\left(\tfrac{n+p-2}{n(p-1)}\,\mathcal K(R_{\mathcal U},\pi)+\tfrac{n+2p-4}{n(p-1)}\,\mathcal K(R_{\mathcal L},\pi)+\mathcal K(R_{\mathcal W},\pi) \right)\ovee\tfrac{\g^{\ovee(p-2)}}{(p-2)!},\\
\mathcal K\big(R,\wedge^p \R^n\big)&=\left(\tfrac{2(n-p)}{p-1}\,R_{\mathcal U}+\tfrac{n-2p}{p-1}\,R_{\mathcal L}-2\,R_{\mathcal W}+4\,R_{\wedge^4} \right)\owedge \tfrac{\g^{\owedge(p-2)}}{(p-2)!},
\end{aligned}
\end{equation*}
for all $p\geq2$ and $2\leq p\leq n-2$ respectively, where $\pi$ is the representation $\Sym^2_0 \R^n$.
\end{mainthm}

The ranges of $p$ not covered in Theorem~\ref{mainthm:KRWedgeSymKN} correspond to trivial cases, see Remark~\ref{rem:trivialcases}. For instance, if $p=1$, then $\mathcal K(R,\wedge^1\R^n)=\mathcal K(R,\Sym^1_0\R^n)=\Ric_R$ is the Ricci tensor of $R$, see Example~\ref{ex:Ricci}; this is the context where the Weitzenb\"ock formula was used by Bochner~\cite{bochner,bochner-yano} to prove vanishing theorems for harmonic $1$-forms and Killing vector fields on closed manifolds whose Ricci curvature has a sign. 
A new application of the Bochner technique, that relies on a detailed analysis of positive-definiteness of $\mathcal K(R,\wedge^p \R^n)$, was recently found in \cite{petersen-wink}.

It was observed by Berger that positive-semidefiniteness of $\mathcal K(R,\Sym^2_0 \R^n)$ is an intermediate condition between $\Ric\geq0$ and $\sec\geq0$, see Remark~\ref{rem:Berger}. Thus, by Theorem~\ref{mainthm:algebraicChar}, positive-semidefiniteness of $\mathcal K(R,\Sym^p_0 \R^n)$ for all $1\leq p\leq q$ provides a family parametrized by $q\in\mathds N$ of interpolating curvature conditions between $\Ric\geq0$ (corresponding to $q=1$) and $\sec\geq0$ (corresponding to $q=\infty$).
The convex sets that form this family are \emph{spectrahedral shadows}, for which there are efficient algorithms to check membership of a given curvature operator, see \cite{BKM}.
Moreover, since $\owedge$-products and $\ovee$-products with the metric $\g$ preserve positive-semidefinite\-ness, Theorem~\ref{mainthm:KRWedgeSymKN} provides a sufficient condition for positive-semidefiniteness of $\mathcal K(R,\wedge^p \R^n)$ and $\mathcal K(R,\Sym^p_0 \R^n)$ in terms of the components of $R$. This is also suggestive of other potential relations between positive-semidefiniteness of the endomorphisms $\mathcal K(R,\Sym^p_0 \R^n)$ for higher values of $p\geq2$, which could lead to algebraic characterizations of sectional curvature bounds more powerful  than Theorem~\ref{mainthm:algebraicChar}.


Our third result concerns simply-connected closed $4$-manifolds with $\sec>0$, which are conjecturally diffeomorphic to $S^4$ or $\C P^2$, and hence have definite intersection form. It is an application of our extension of Labbi~\cite{Labbi15} to the modified case, that is, the formula $\mathcal K(R_{\wedge^4},\wedge^2 \R^4)=4R_{\wedge^4}$ in Theorem~\ref{mainthm:KRWedgeSymKN}, together with the Bochner technique and the  \emph{Finsler--Thorpe trick}~\cite{ThorpeJDG,Thorpe72}, see also \cite{BKM}.

\begin{mainthm}\label{mainthm:dim4}
Let $(M,\g)$ be a closed simply-connected Riemannian $4$-manifold with $\sec>0$ whose intersection form is indefinite. Then the set of points where the curvature operator is not positive-definite has at least two connected components with nonempty interior.
\end{mainthm}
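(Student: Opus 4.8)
The plan is to argue by contradiction using the Bochner technique applied to harmonic $2$-forms, exploiting that on a closed $4$-manifold with indefinite intersection form there exist nontrivial harmonic self-dual and anti-self-dual forms. First I would recall the Weitzenb\"ock formula for $2$-forms on a $4$-manifold: for $\omega\in\Omega^2(M)$ one has $\Delta\omega=\nabla^*\nabla\omega+\mathcal K(R,\wedge^2\R^4)\,\omega$, and by Theorem~\ref{mainthm:KRWedgeSymKN} with $n=4$, $p=2$ the curvature term is (a multiple of) the \emph{modified} curvature operator acting on $\wedge^2\R^4$; concretely $\mathcal K(R,\wedge^2\R^4)$ is conjugate, up to the scalar-curvature normalization, to the operator $R$ with its $R_{\wedge^4}$ part flipped in sign. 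Since $M$ is closed, integrating the Weitzenb\"ock identity against a harmonic $2$-form $\omega$ gives $0=\int_M |\nabla\omega|^2 + \langle \mathcal K(R,\wedge^2\R^4)\omega,\omega\rangle\,d\vol$, so if $\mathcal K(R,\wedge^2\R^4)$ were positive-semidefinite everywhere and positive-definite somewhere, every harmonic $2$-form would vanish, contradicting indefiniteness of the intersection form (which forces $b^+\geq1$ and $b^-\geq1$, hence $b_2\geq2$).

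The key observation is Thorpe's trick: at each point $p$, the bilinear form $\sec_R$ depends only on $R$ modulo the four-form term, so one may add a suitable multiple of the Hodge star (equivalently, modify $R$ by $R_{\wedge^4}$) without changing the sectional curvatures; since $\sec>0$, Thorpe's trick produces at each point a choice making the \emph{modified} curvature operator have all eigenvalues positive on some summand — more precisely, $\sec_R>0$ at $p$ is equivalent to the existence of $x\in\R$ such that $R+x\,* $ is positive-definite on $\wedge^2\R^4$. But the relevant operator in the Weitzenb\"ock formula is $R$ with the four-form part reflected, i.e. $R - R_{\wedge^4}$ in the natural splitting, which is $(R+x*)$ for the \emph{opposite} sign of $x$. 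So positivity of $\sec_R$ does not directly give positive-definiteness of $\mathcal K(R,\wedge^2\R^4)$; rather, it tells us that $\mathcal K(R,\wedge^2\R^4)$ is positive-definite at $p$ if and only if the Thorpe parameter interval for $R$ at $p$ contains $0$ with the correct sign, equivalently iff the self-dual and anti-self-dual blocks of the genuine operator $R$ already have a definiteness pattern compatible with $R_{\wedge^4}=0$.

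Next I would analyze the set $Z=\{p\in M: R_p \text{ is not positive-definite}\}$. By Thorpe's trick and $\sec>0$, at every point there is a nonempty open interval $I_p\subset\R$ of parameters $x$ with $R_p+x*$ positive-definite; $p\notin Z$ exactly when $0\in I_p$. The function assigning to $p$ the (closure of the) interval $I_p=[a(p),b(p)]$ has continuous endpoints $a,b\colon M\to\R$ with $a<b$, and $a(p)=-\lambda^-_{\min}(p)\cdot(\text{something})$, $b(p)=\lambda^+_{\min}(p)\cdot(\text{something})$ where $\lambda^\pm$ are eigenvalues of the self-dual/anti-self-dual blocks after the substitution; the point is that $Z=\{a>0\}\cup\{b<0\} = \{a>0\}\sqcup\{b<0\}$ is a disjoint union of two open sets, since $a<b$ forbids $a>0$ and $b<0$ simultaneously. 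It remains to show each of these two open sets is nonempty. If, say, $\{b<0\}=\emptyset$, then $b\geq0$ everywhere, which means $R_p+0\cdot*=R_p$ is positive-\emph{semidefinite} at every point (a limiting case of Thorpe's trick), hence $\mathcal K(R,\wedge^2\R^4)$ — which on the relevant block equals $R$ up to the normalization — is positive-semidefinite everywhere; combined with $\sec>0$ forcing it to be positive-definite on a dense open set, the Bochner argument above kills all harmonic $2$-forms, contradicting $b_2\geq2$. Symmetrically $\{a>0\}=\emptyset$ is impossible. Hence $Z$ has at least two connected components (at least one inside each of the two nonempty disjoint open pieces).

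The main obstacle I anticipate is bookkeeping the precise dictionary between the Weitzenb\"ock operator $\mathcal K(R,\wedge^2\R^4)$ from Theorem~\ref{mainthm:KRWedgeSymKN} and the Hodge-star-shifted curvature operators appearing in Thorpe's trick, in particular pinning down the sign with which $R_{\wedge^4}$ enters $\mathcal K(R,\wedge^2\R^4)$ and verifying that positive-semidefiniteness of this operator everywhere plus positivity on a dense set indeed forces $b_2=0$ via Bochner. Once that normalization is fixed, the topological input ($b^+,b^-\geq1$ from indefiniteness) and the elementary observation that $\{a>0\}$ and $\{b<0\}$ are disjoint open sets do the rest; some care is also needed to rule out the degenerate possibility that $Z$ is nonempty but connected, which is exactly what the disjointness of the two pieces prevents.
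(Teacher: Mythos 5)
Your overall skeleton matches the paper's: Thorpe's trick gives at each point an interval of parameters $x$ with $R+x\,*$ positive-definite, the set $Z$ of points where $R$ is not positive-definite splits into two disjoint pieces according to which side of $0$ that interval lies on (note these pieces are the \emph{closed} sets $\{a\ge0\}$ and $\{b\le0\}$, not $\{a>0\}$ and $\{b<0\}$, since $a(p)=0$ or $b(p)=0$ still puts $p$ in $Z$), and a Bochner argument with harmonic $2$-forms must show each piece is nonempty. The paper realizes the same dichotomy by fixing a continuous $f$ with $R+f\,*$ positive-definite and observing $Z\subset\{f>0\}\sqcup\{f<0\}$.

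The gap is in your nonemptiness step, and it is not cosmetic. First, $b\ge0$ everywhere does \emph{not} imply $R_p$ is positive-semidefinite everywhere: that requires $0\in[a(p),b(p)]$, hence also $a(p)\le0$, and the points with $0\le a(p)<b(p)$ are exactly the ones whose existence is at stake. Second, even granting $R\succeq0$ everywhere, $\sec>0$ does not force $\mathcal K(R,\wedge^2\R^4)$ to be positive-definite on a dense set: the Fubini--Study metric on $\C P^2$ has $\sec>0$ and $R\succeq0$, yet $\mathcal K(R,\wedge^2\R^4)$ annihilates the harmonic, self-dual, nowhere-vanishing K\"ahler form at every point. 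This also exposes a structural problem: as written, your contradiction consumes only $b_2(M)>0$, which cannot suffice, since $\C P^2$ has $b_2=1$, $\sec>0$, and $Z=M$ connected. Indefiniteness must enter through the sign asymmetry $\mathcal K(*,\wedge^2\R^4)\,\alpha_\pm=\pm4\,\alpha_\pm$; your dictionary is also off here, since by Theorem~\ref{mainthm:KRWedgeSymKN} one has $\mathcal K(R,\wedge^2\R^4)=4R_{\mathcal U}-2R_{\mathcal W}+4R_{\wedge^4}$, so $R_{\wedge^4}$ enters with coefficient $+4$ rather than with a flipped sign. The correct implementation is: assuming $Z\cap\{f>0\}=\emptyset$, the truncation $f_0=\min\{0,f\}$ still satisfies $R+f_0\,*\succ0$, and in the Bochner identity for a harmonic \emph{self-dual} form $\alpha_+$ the correction term $-2\langle\mathcal K(f_0\,*,\wedge^2\R^4)\alpha_+,\alpha_+\rangle=-8f_0\|\alpha_+\|^2$ is nonnegative precisely because $f_0\le0$, which kills $\alpha_+$ and contradicts $b_+(M)>0$; the anti-self-dual form, paired with $f^0=\max\{0,f\}$, handles the other side. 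Each of the hypotheses $b_+(M)>0$ and $b_-(M)>0$ is used by exactly one of the two cases, which is why the argument genuinely needs the intersection form to be indefinite.
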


The question of whether $S^2\times S^2$, which has indefinite intersection form, admits metrics with $\sec>0$ is a famous unsolved problem, known as the Hopf~Conjecture.
By Theorem~\ref{mainthm:dim4}, if any such metric $\g$ existed, then there would be a region which disconnects $(S^2\times S^2,\g)$ where its curvature operator is positive-definite. Recall that closed $n$-manifolds whose curvature operator is \emph{everywhere} positive-definite are diffeomorphic to a space form $S^n/\Gamma$, see~\cite{bw}. 

Another direct consequence of Theorem~\ref{mainthm:dim4} is that a closed simply-connected K\"ahler $4$-manifold with $\sec>0$ has definite intersection form. Indeed, the curvature operator of a K\"ahler $4$-manifold is not positive-definite at any point.
Recall that a classical result of Andreotti and Frankel \cite{frankel}, using methods from complex and algebraic geometry, shows that the only K\"ahler $4$-manifold as above is $\C P^2$.

Relaxing the condition $\sec>0$ to $\sec\geq0$ on closed simply-connected $4$-manifolds, the conjectured list of possible diffeomorphism types grows to include $S^2\times S^2$, $\C P^2\#\overline{\C P^2}$, and $\C P^2\# {\C P^2}$. Apart from $\C P^2\# {\C P^2}$, these $4$-manifolds have indefinite intersection form. In this context, we have a rigidity counterpart to Theorem~\ref{mainthm:dim4}:

\begin{mainthm}\label{mainthm:dim4rigidity}
Let $(M,\g)$ be a closed simply-connected Riemannian $4$-manifold with $\sec\ge0$ whose intersection form is indefinite. Then one of the following holds:
\begin{enumerate}[\rm (i)]
\item The set of points where the curvature operator is not positive-semidefinite has at least two connected components;
\item $(M,\g)$ is isometric to the Riemannian product $\big(S^2\times S^2,\g_1\oplus\g_2\big)$ where $(S^2,\g_1)$ and $(S^2,\g_2)$ have $\sec\geq0$.
\end{enumerate} 
\end{mainthm}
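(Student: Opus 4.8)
The plan is to prove Theorem~\ref{mainthm:dim4rigidity} by adapting the argument for Theorem~\ref{mainthm:dim4} and treating the degenerate case carefully. In dimension $4$, the Hodge star decomposes $\wedge^2\R^4=\wedge^2_+\oplus\wedge^2_-$, and by Thorpe's trick one may modify $R$ by a multiple of $*\colon\wedge^2\to\wedge^2$ (which changes only the $R_{\wedge^4}$ component, i.e.\ adds to the "deviation from Bianchi" part without affecting sectional curvatures) so as to arrange that the modified operator $\widehat R$ is positive-semidefinite at a given point precisely when $\sec_R\ge0$ there, provided one chooses the modifying constant optimally. Concretely, at each point one sets $\widehat R = R + \lambda\,*$ where $\lambda$ is chosen so that the smallest eigenvalue of $\widehat R|_{\wedge^2_+}$ equals the smallest eigenvalue of $\widehat R|_{\wedge^2_-}$; Thorpe's trick says $\sec_R\ge0$ iff this common "balanced" operator is positive-semidefinite. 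The subtlety relative to the $\sec>0$ case is that we only get $\ge 0$, so we must track where $\widehat R$ fails to be \emph{positive-definite}.

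First I would run the Bochner argument on self-dual and anti-self-dual harmonic $2$-forms. Since the intersection form is indefinite, $b_2^+\ge 1$ and $b_2^-\ge 1$, so there exist nonzero harmonic forms $\omega_\pm\in\mathcal H^2_\pm(M)$. The Weitzenböck formula $\Delta=\nabla^*\nabla+\mathcal K(R,\wedge^2\R^4)$ together with the formula for $\mathcal K(R,\wedge^2\R^4)$ from Theorem~\ref{mainthm:KRWedgeSymKN} gives, upon pairing with $\omega_\pm$ and integrating, that $\int_M \langle \mathcal K(R,\wedge^2\R^4)\omega_\pm,\omega_\pm\rangle + \int_M|\nabla\omega_\pm|^2 = 0$. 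Here $\mathcal K(R,\wedge^2\R^4)$ acting on $\wedge^2$ is, up to the universal constants in Theorem~\ref{mainthm:KRWedgeSymKN} with $n=4$, $p=2$, essentially $2\widehat R$ for the Thorpe-modified operator restricted appropriately (the $R_{\wedge^4}$ term contributing the multiple of $*$, and the metric-product factor being trivial since $p=2$). If $\sec_R\ge0$ everywhere, then $\widehat R$ is positive-semidefinite everywhere by Thorpe's trick, so $\langle \mathcal K(R,\wedge^2\R^4)\omega_\pm,\omega_\pm\rangle\ge0$ pointwise; hence both $\omega_\pm$ are parallel and $\mathcal K(R,\wedge^2\R^4)\omega_\pm=0$ at every point, i.e.\ $\omega_\pm(x)$ lies in the kernel of $\widehat R(x)$ for all $x$.

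Next I would exploit this to force either the dichotomy or the splitting. Since $\omega_+$ is a nonzero parallel self-dual $2$-form and $\omega_-$ a nonzero parallel anti-self-dual $2$-form, their stabilizers constrain the holonomy: $\mathrm{Hol}(M)\subseteq \U(2)_+\cap\U(2)_-$ (viewing $\omega_\pm$ as Kähler forms for the two orientations after normalizing), and $\U(2)_+\cap\U(2)_-$ acting on $\R^4$ is contained in $\SO(2)\times\SO(2)$, which is reducible. By the de Rham decomposition theorem and simple-connectedness, $(M,\g)$ splits isometrically as a Riemannian product $(\Sigma_1\times\Sigma_2,\g_1\oplus\g_2)$ of surfaces; if $\sec_R\ge0$ the factors have $\sec\ge0$, and since $M$ is simply-connected and closed the surfaces are spheres, giving alternative (ii). If instead $\sec_R\ge0$ does \emph{not} hold everywhere, then — by the \emph{same} reasoning as in Theorem~\ref{mainthm:dim4}, now with "positive-definite" replaced by "positive-semidefinite" — the set $Z=\{x: R(x) \text{ not positive-semidefinite}\}$ (equivalently, where $\widehat R$ is not positive-semidefinite in the Thorpe-balanced sense fails to have a definite sign structure) must disconnect the regions where the two harmonic forms can "hide" their kernels, producing the two components in alternative (i). The connectedness argument is the one place I would need to be careful: it mirrors the proof of Theorem~\ref{mainthm:dim4}, using that on each connected component of the complement of $Z$ the sign of $\det(\widehat R|_{\wedge^2_\pm})$ is locally constant and that a parallel form with values in $\ker\widehat R$ cannot survive across a region where $\widehat R$ is positive-definite; I expect reconciling the "$\ge$" (rather than strict) version of Thorpe's trick with the connectedness count to be the main obstacle, since one must rule out the borderline configurations where $\widehat R$ is positive-semidefinite but singular on an interior region without $M$ splitting.

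In summary, the proof is: (1) set up Thorpe's trick in dimension $4$ and identify $\mathcal K(R,\wedge^2\R^4)$ with the balanced operator; (2) apply Bochner to harmonic forms in $\mathcal H^2_\pm$, which are nonempty by indefiniteness of the intersection form; (3) in the case $\sec\ge0$ everywhere, deduce both forms are parallel, force reducible holonomy, and conclude the $S^2\times S^2$ splitting via de Rham; (4) otherwise, use the local sign analysis of $\det(\widehat R|_{\wedge^2_\pm})$ together with the parallel-form obstruction — exactly as in Theorem~\ref{mainthm:dim4} — to show the non-positive-semidefinite locus has at least two components. The hard part will be Step~(4)'s rigidity analysis: carefully handling the non-strict inequalities so that the only way to avoid two components is the isometric product.
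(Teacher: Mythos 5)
Your outline breaks down at the central step. Under the standing hypothesis $\sec\geq0$ you assert that $\langle\mathcal K(R,\wedge^2\R^4)\,\omega_\pm,\omega_\pm\rangle\geq0$ pointwise because the Thorpe modification $\widehat R=R+f\,*$ is positive-semidefinite. This does not follow: by Theorem~\ref{mainthm:KRWedgeSymKN} one has $\mathcal K(f\,*,\wedge^2\R^4)=4f\,*$, which acts as $+4f$ on self-dual forms and $-4f$ on anti-self-dual forms, so
$\langle\mathcal K(R,\wedge^2\R^4)\alpha_+,\alpha_+\rangle=\langle\mathcal K(\widehat R,\wedge^2\R^4)\alpha_+,\alpha_+\rangle-4f\|\alpha_+\|^2$,
and the second term has the wrong sign wherever $f>0$ (and symmetrically for $\alpha_-$ wherever $f<0$). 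If your claim were true, your argument would show that \emph{every} closed simply-connected $4$-manifold with $\sec\geq0$ and indefinite intersection form splits as $S^2\times S^2$; this is false, since $\C P^2\#\overline{\C P^2}$ carries metrics with $\sec\geq0$. Relatedly, you frame the dichotomy as ``$\sec\geq0$ everywhere'' versus ``not,'' but $\sec\geq0$ everywhere is the hypothesis of the theorem, so that cannot be the case split.

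The correct mechanism, which your step (4) gestures at but does not supply, is a case split on the \emph{sign of $f$} relative to the non-positive-semidefinite locus of the unmodified $R$. Since $R$ is positive-semidefinite on $f^{-1}(0)$, if that locus meets both $\{f>0\}$ and $\{f<0\}$ it is disconnected and (i) holds. Otherwise, say $R$ is positive-semidefinite wherever $f>0$; then $f_0:=\min\{0,f\}$ still makes $R+f_0\,*$ positive-semidefinite, and in the Bochner identity for the self-dual form the correction term becomes $-8f_0\|\alpha_+\|^2\geq0$, now with the \emph{correct} sign. This forces $\alpha_+$ parallel of constant norm \emph{and} $f_0\equiv0$, i.e.\ $R$ itself is positive-semidefinite everywhere; only then can one run Bochner on $\alpha_-$ as well. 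Your final splitting step (reducible holonomy from the two parallel forms, or equivalently the paper's observation that $\alpha_++\alpha_-$ with $\|\alpha_+\|=\|\alpha_-\|$ satisfies the Pl\"ucker relation and is a parallel $2$-plane field) is fine once both forms are known to be parallel, but the proposal as written does not get there.
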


As mentioned above, closed manifolds with \emph{everywhere} positive-semidef\-inite curvature operator are classified~\cite[Thm.~1.13]{wilking-survey}.

The use of symmetries led to substantial advances towards the Hopf Conjecture, notably with~\cite{hsiang-kleiner} and \cite{grove-wilking}, and more generally towards understanding manifolds with $\sec>0$ or $\sec\geq0$ via the Grove Symmetry Program~\cite{grove-survey}.
However, as these methods reach their exhaustion, it seems very important to make new progress \emph{without relying on symmetries}, as is achieved in Theorems~\ref{mainthm:dim4} and \ref{mainthm:dim4rigidity}.

Finally, Theorems~\ref{mainthm:dim4} and \ref{mainthm:dim4rigidity} also yield the following global obstruction to the existence of \emph{curvature-homogeneous} metrics, i.e., metrics whose curvature operator is the same at every point, cf.~\cite{kowalski}; see \cite{kowalski2} and \cite[\S 1.4]{gilkey-book} for surveys.

\begin{maincor}
Let $(M,\g)$ be a closed simply-connected Riemannian $4$-manifold with indefinite intersection form. If $(M,\g)$ has $\sec>0$, then it is not curvature-homogeneous. If $(M,\g)$ has $\sec\geq0$, then it can only be curvature-homogeneous if it is isometric to the product of two round spheres of possibly different radii.
\end{maincor}

This paper is organized as follows. In Section~\ref{sec:weitzenformulae}, we recall the abstract framework for Weitzenb\"ock formulae, the definition and basic properties of its curvature term $\mathcal K(R,\rho)$, some aspects of exterior and symmetric powers (of the defining $\O(n)$-representation), and provide a general overview of the Bochner technique. Section~\ref{sec:characterization} contains the proof of Theorem~\ref{mainthm:algebraicChar}. After recalling the classical Kulkarni--Nomizu product $\owedge$ for symmetric tensors on exterior powers, we introduce the product $\ovee$ for symmetric tensors on symmetric powers in Section~\ref{sec:KNalgebras}. Furthermore, some important representation theoretic aspects of these Kulkarni--Nomizu algebras are discussed in Subsection~\ref{subsec:reptheory}. Building on this material, Theorem~\ref{mainthm:KRWedgeSymKN} is proved in Section~\ref{sec:proofThmB}. Finally, Section~\ref{sec:BochnerDim4} contains the proofs of Theorems~\ref{mainthm:dim4} and \ref{mainthm:dim4rigidity}.

\subsection*{Acknowledgements} It is our pleasure to thank
Matthew Gursky for bringing to our attention an observation of Berger about $\mathcal K(R,\Sym^2_0\R^n)$ discussed in Remark~\ref{rem:Berger},
Marco Radeschi for suggestions related to the choice of $f_\varepsilon$ in the proof of Theorem~\ref{mainthm:algebraicChar}, and Uwe Semmelmann for comments related to Proposition~\ref{prop:IntegralFormula}. We also thank 
the Max Planck Institute for Mathematics in Bonn, the Westf\"alischen Wilhelms Universit\"at M\"unster, and the Universit\"at zu K\"oln for providing excellent working conditions and support throughout the development of this project.

The first-named author was supported by the National Science Foundation
grant DMS-1904342; the second-named author was supported by the Deutsche Forschungsgemeinschaft grants DFG ME 4801/1-1 and DFG SFB TRR 191.

\subsection*{Conventions}
Throughout the paper we make frequent use of the following conventions. Every finite-dimensional vector space $V$ with an inner product is identified with the dual vector space $V^*$, and we often regard the exterior power $\wedge^p V$ and the symmetric power $\Sym^p V$ as vector subspaces of the tensor power $V^{\otimes p}$.
The canonical orthonormal basis of $\R^n$ is denoted $\{e_i\}$. The corresponding orthonormal basis of the $p$th \emph{exterior power} $\wedge^p \R^n$ is $\{e_{i_1}\wedge e_{i_2}\wedge \dots\wedge e_{i_p}, i_1<i_2<\dots <i_p\}$. 
For $p=2$, there is a canonical identification $\wedge^2 \R^n\cong \mathfrak{so}(n)$ with the Lie algebra of $\SO(n)$, given by $e_i\wedge e_j\cong E_{ij}$, where $E_{ij}$ is the matrix whose $(i,j)$ entry is $+1$, $(j,i)$ entry is $-1$, and all other entries are zero.

The $p$th \emph{symmetric power} $\Sym^p \R^n$ is spanned by $\{e_{i_1}\vee e_{i_2}\vee \dots\vee e_{i_p}, i_1\leq i_2\leq \dots \leq i_p\}$. Moreover, we identify $\Sym^p \R^n$ with the space of homogeneous polynomials of degree $p$ on $\R^n$, by means of $e_{i_1}\vee e_{i_2}\vee \dots\vee e_{i_p}\cong x_{i_1} x_{i_2} \dots x_{i_p}$, where $x_i$ denote the canonical coordinates of $\R^n$.
We also denote the product of two polynomials $\varphi\in\Sym^p\R^n$ and $\psi\in\Sym^q\R^n$ by $\varphi\vee\psi\in\Sym^{p+q}\R^n$.
The inner product of two elements $\varphi,\psi \in \Sym^p\R^n$ is defined to be $\langle \varphi,\psi\rangle =\widehat\varphi(\psi)$, where $\widehat\varphi$ is the differential operator of order $p$ \emph{dual} to $\varphi$, obtained by replacing each $x_i$ with $\frac{\partial}{\partial x_i}$. For example, $\|e_i\vee e_j\|=1$ if $i\neq j$, however $\|e_i\vee e_i\|=\sqrt2$.
The space of \emph{traceless} symmetric $p$-tensors $\Sym^p_0 \R^n$ is the subspace of $\Sym^p\R^n$ consisting of \emph{harmonic} polynomials. Its orthogonal complement is isomorphic to $\Sym^{p-2}\R^n$ via multiplication by $r^2=\sum_{i=1}^n x_i^2$. 
Given $\varphi\in\Sym^p\R^n$, we shall denote by $\varphi_0$ its orthogonal projection onto $\Sym^p_0\R^n$.
The above element $r^2\in\Sym^2\R^n$ is often called $\g$, which also stands for Riemannian metrics.

Finally, our sign convention for the curvature operator $R\colon\wedge^2 T_pM\to\wedge^2T_pM$ of a Riemannian manifold $(M,\g)$ is given by
$\langle R(X\wedge Y),Z\wedge W\rangle=\langle R(X,Y)Z,W\rangle = \langle \nabla_{[X,Y]}Z-\nabla_X\nabla_Y Z+\nabla_Y\nabla_X Z,W\rangle$, so that $\sec(X\wedge Y)=R(X\wedge Y,X\wedge Y)$ if $X$ and $Y$ are orthonormal vectors. 

\section{Weitzenb\"ock formulae}
\label{sec:weitzenformulae}

In this section, we recall the abstract framework for \emph{Weitzenb\"ock formulae} following \cite{Besse,Hitchin15}, discuss the basic algebraic properties of its curvature term, and give a quick account of the so-called \emph{Bochner technique}.

Let $(M,\g)$ be an $n$-dimensional Riemannian manifold, and let $\operatorname{Fr}(TM)$ denote the principal $\O(n)$-bundle of orthonormal frames on $(M,\g)$. The structure group $\O(n)$ is replaced accordingly if $(M,\g)$ has additional structures, e.g., with $\SO(n)$ if $(M,\g)$ has an orientation, and with $\Spin(n)$ if it has a spin structure. Every orthogonal representation $\rho\colon \O(n)\to\O(E)$ gives rise to a Riemannian vector bundle $E\to M$, associated to $\operatorname{Fr}(TM)$ via $\rho$, that we also denote by $E$ or $\rho$. Moreover, $E$ has a linear connection induced by the Levi-Civita connection of $(M,\g)$, and both connections are denoted~$\nabla$. The \emph{connection Laplacian}, or \emph{rough Laplacian}, is the differential operator $\nabla^*\nabla$ on sections of $E$, where $\nabla^*$ is the $L^2$-adjoint of $\nabla$; or, in local coordinates, $\nabla^*\nabla=-\sum_{i=1}^k \nabla_{E_i} \nabla_{E_i}$ where $\{E_i\}$ is an orthonormal frame for $TM$. Note that a section $\sigma$ of $E$ satisfies $\nabla^*\nabla \sigma=0$ if and only if $\sigma$ is parallel.

The \emph{Weitzenb\"ock formula} relates the connection Laplacian and other geometrically relevant \emph{Laplacians} on sections of $E$, collectively denoted by $\Delta$, via
\begin{equation}\label{eq:weitzenbock}
\Delta=\nabla^*\nabla + t \, \mathcal K(R,\rho),
\end{equation}
where $t\in\R$ is an appropriately chosen constant. The \emph{curvature term} $\mathcal K(R,\rho)$ is the fiberwise linear endomorphism of $E$ defined by
\begin{equation}\label{eq:krho}
\mathcal K(R,\rho)=-\sum_{a,b} R_{ab} \;\dd\rho(X_a)\dd\rho(X_b),
\end{equation}
where $\{X_a\}$ is an orthonormal basis of the Lie algebra $\mathfrak{so}(n)\cong\wedge^2\R^n$, such that the curvature operator of $(M,\g)$ is identified with $R=\sum_{a,b} R_{ab} \,X_a\otimes X_b$ by using an orthonormal frame on $M$, cf.~\cite[(1.142)]{Besse}, \cite{Hitchin15}. Note that $\mathcal K(R,\rho)$ is self-adjoint, since $R$ is self-adjoint and $\dd\rho(X)$ are skew-adjoint, as $\rho$ is orthogonal.

A fundamental observation is that \eqref{eq:krho}, being a pointwise definition, can be understood abstractly as an algebraic construction that, to each $R\in\Sym^2(\wedge^2\R^n)$, that is, to each \emph{modified algebraic curvature operator}, associates $\mathcal K(R,\rho)\in\Sym^2(E)$. In other words, there is no need for $R$ to satisfy the first Bianchi identity. The following properties of \eqref{eq:krho} are straightforward to verify.

\begin{proposition}\label{prop:Krho}
The map $\mathcal K(\cdot,\rho)\colon\Sym^2(\wedge^2 \R^n)\to\Sym^2(E)$ satisfies:
\begin{enumerate}[\rm (i)]
\item $\mathcal K(\cdot,\rho)$ is linear and $\O(n)$-equivariant;
\item $\mathcal K(R,\rho)=0$ if $\rho$ is the trivial representation (or, more generally, if $\dd \rho=0$);
\item $\mathcal K(R,\rho_1\oplus\rho_2)=\mathcal K(R,\rho_1)\oplus\mathcal  K(R,\rho_2)\in\Sym^2(E_1)\oplus\Sym^2(E_2)\subset\Sym^2(E_1\oplus E_2)$ for any two $\O(n)$-representations $\rho_i\colon\O(n)\to \O(E_i)$;
\item If $\rho$ has no trivial factors and $R$ is positive-definite, then $\mathcal K(R,\rho)$ is also positive-definite. In particular, if $R$ is positive-semidefinite, then so is $\mathcal K(R,\rho)$.
\end{enumerate}
\end{proposition}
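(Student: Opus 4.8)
The plan is to verify each of the four items directly from the defining formula \eqref{eq:krho}, using that $\mathcal K(R,\rho)=-\sum_{a,b}R_{ab}\,\dd\rho(X_a)\dd\rho(X_b)$ depends on $R$ only through its components $R_{ab}$, and linearly. Items (i)--(iii) are essentially bookkeeping, while the second half of (iv) is the only part with any content, and it rests on the spectral decomposition of $R$ together with the skew-adjointness of $\dd\rho(X)$.

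For (i), linearity in $R$ is immediate since $R\mapsto R_{ab}$ is linear. For $\O(n)$-equivariance one first observes that the right-hand side of \eqref{eq:krho} is independent of the chosen orthonormal basis $\{X_a\}$ of $\mathfrak{so}(n)\cong\wedge^2\R^n$: under a change of orthonormal basis the components $R_{ab}$ transform contravariantly and the operators $\dd\rho(X_a)\dd\rho(X_b)$ covariantly, so the contraction is invariant. The action of $g\in\O(n)$ on $\Sym^2(\wedge^2\R^n)$ carries $\sum_{a,b}R_{ab}\,X_a\otimes X_b$ to $\sum_{a,b}R_{ab}\,(\Ad_g X_a)\otimes(\Ad_g X_b)$, and since $\{\Ad_g X_a\}$ is again orthonormal we may evaluate $\mathcal K(g\cdot R,\rho)=-\sum_{a,b}R_{ab}\,\dd\rho(\Ad_g X_a)\dd\rho(\Ad_g X_b)$; using $\dd\rho(\Ad_g X)=\rho(g)\,\dd\rho(X)\,\rho(g)^{-1}$ this collapses to $\rho(g)\,\mathcal K(R,\rho)\,\rho(g)^{-1}$, which is precisely $g\cdot\mathcal K(R,\rho)$ for the conjugation action on $\Sym^2(E)$. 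Item (ii) is clear, as every summand of \eqref{eq:krho} contains a factor $\dd\rho(X)$. For (iii), $\dd(\rho_1\oplus\rho_2)(X)=\dd\rho_1(X)\oplus\dd\rho_2(X)$ acts block-diagonally on $E_1\oplus E_2$, so the same is true of each product $\dd(\rho_1\oplus\rho_2)(X_a)\dd(\rho_1\oplus\rho_2)(X_b)$, and summing against $R_{ab}$ preserves the block decomposition.

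For (iv), diagonalize the self-adjoint operator $R$ in an orthonormal eigenbasis $\{\omega_\alpha\}$ of $\wedge^2\R^n$ with eigenvalues $\lambda_\alpha\geq 0$ (resp. $\lambda_\alpha>0$). Choosing $\{X_a\}=\{\omega_\alpha\}$ in \eqref{eq:krho} gives $R_{\alpha\beta}=\lambda_\alpha\delta_{\alpha\beta}$, hence $\mathcal K(R,\rho)=-\sum_\alpha\lambda_\alpha\,\dd\rho(\omega_\alpha)^2$. Since $\rho$ is orthogonal, each $\dd\rho(\omega_\alpha)$ is skew-adjoint, so $\langle -\dd\rho(\omega_\alpha)^2 v,v\rangle=\|\dd\rho(\omega_\alpha)v\|^2\geq 0$, and summing shows $\mathcal K(R,\rho)$ is positive-semidefinite. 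When all $\lambda_\alpha>0$ we have $\langle\mathcal K(R,\rho)v,v\rangle=\sum_\alpha\lambda_\alpha\|\dd\rho(\omega_\alpha)v\|^2$, which vanishes only for $v$ in the common kernel $\bigcap_{X\in\mathfrak{so}(n)}\ker\dd\rho(X)$; this subspace is a subrepresentation on which $\dd\rho$, hence the identity component of the structure group, acts trivially, so the hypothesis that $\rho$ has no trivial factors forces it to be $0$ and $\mathcal K(R,\rho)$ to be positive-definite.

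No step here is a genuine obstacle; the two places warranting a word of care are the basis-independence of the contraction in \eqref{eq:krho}, used in the proof of equivariance, and, in (iv), the identification of the common kernel of $\dd\rho$ with the trivial part of $\rho$ (read at the level of the Lie algebra $\mathfrak{so}(n)$).
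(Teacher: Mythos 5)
Your verification is correct, and it is precisely the routine check the paper intends: the paper gives no proof, stating only that the properties "are straightforward to verify." The one delicate point is the one you flag yourself at the end of (iv): the common kernel $\bigcap_X\ker\dd\rho(X)$ is the subrepresentation on which the identity component $\SO(n)$ acts trivially, and for an $\O(n)$-representation this may a priori contain copies of the (nontrivial) determinant representation as well as of the trivial one; so "no trivial factors" must be read, consistently with item (ii), as "no factors with $\dd\rho=0$," and with that reading your argument is complete.
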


\begin{example}\label{ex:Ricci}
Let $\rho$ be the defining $\O(n)$-representation on $\R^n$, and recall that the corresponding associated bundle is the tangent bundle $TM$.
Set $\{X_a\}$ to be the basis of $\mathfrak{so}(n)$ consisting of matrices $E_{ij}$, $1\leq i<j\leq n$, defined in the conventions in the Introduction. Given $R\in\Sym^2(\wedge^2\R^n)$, write $R_{ijkl}$ for the coefficient $R_{ab}$, where $X_a=E_{ij}$ and $X_b=E_{kl}$. Since $\dd\rho(X)=X$, we have that, for all $p\leq q$,
\begin{align*}
\langle\mathcal K(R,\rho)e_p,e_q\rangle &=-\sum_{i<j}\sum_{k<l} R_{ijkl}\langle E_{ij}E_{kl}e_p,e_q\rangle\\
&=\sum_{i<j}\sum_{k<l} R_{ijkl}\langle \delta_{lp}e_k-\delta_{kp}e_l,\delta_{jq}e_i-\delta_{iq}e_j\rangle\\
&=\sum_{i<j} \sum_{k<l} R_{ijkl} (\delta_{lp}\delta_{jq}\delta_{ki}-\delta_{lp}\delta_{iq}\delta_{kj}-\delta_{kp}\delta_{jq}\delta_{li}+\delta_{kp}\delta_{iq}\delta_{lj})\\
&=\sum_{i<p} R_{iqip}-\sum_{p<i<q} R_{iqpi}+\sum_{q<j} R_{qjpj}\\
&=\sum_{i} R_{piqi}\\
&=\Ric_R(e_p,e_q).
\end{align*}
Thus, the curvature term $\mathcal K(R,\rho)\in\Sym^2(\R^n)$ is the \emph{Ricci tensor} of $R$.
\end{example}

\subsection{Exterior and Symmetric Powers}\label{subsec:ESpowers}
The representations $\rho$ of $\O(n)$ that are most relevant for this paper are the \emph{exterior powers} $\wedge^p\R^n$ and \emph{symmetric powers} $\Sym^p\R^n$ of the defining representation of $\O(n)$ in $\R^n$, respectively given by
\begin{equation}\label{eq:ESpowers}
\begin{aligned}
\rho(A)(v_1\wedge \dots\wedge v_p) &=Av_1\wedge \dots \wedge Av_p,\\ 
\rho(A)(v_1\vee \dots\vee v_p) &=Av_1\vee \dots \vee Av_p,
\end{aligned}
\end{equation}
for all $A\in\O(n)$, $v_i\in\R^n$. The linearizations of \eqref{eq:ESpowers} are given by the Leibniz Rule:
\begin{equation}\label{eq:LeibnizRule}
\begin{aligned}
\dd\rho(X)(v_1\wedge \dots\wedge v_p) &=(X v_1)\wedge \dots \wedge v_p+\dots+v_1\wedge\dots\wedge v_{p-1}\wedge (Xv_p),\\ 
\dd\rho(X)(v_1\vee \dots\vee v_p) &=(X v_1)\vee \dots \vee v_p+\dots+v_1\vee\dots\vee v_{p-1}\vee (Xv_p),
\end{aligned}
\end{equation}
for all $X\in\mathfrak{so}(n)$. Identifying $\varphi\in\Sym^p\R^n$ with a homogeneous polynomial $\varphi\colon\R^n\to\R$, as explained in the end of the Introduction, the action of $\O(n)$ becomes
\begin{equation*}
\rho(A)(\varphi)=\varphi\circ A^{-1},
\end{equation*}
where $A\in\O(n)$ is identified with the orthogonal transformation $A\colon \R^n\to\R^n$. In this language, the linearization of $\rho$ at a matrix $X=E_{ij}\in\mathfrak{so}(n)$ becomes
\begin{equation}\label{eq:drhoEij}
\dd\rho(E_{ij})\varphi=\tfrac{\dd}{\dd t}\left(\varphi \circ e^{-t E_{ij}}\right)\big|_{t=0}=\left(x_i\tfrac{\partial}{\partial x_j}-x_j\tfrac{\partial}{\partial x_i}\right)\varphi.
\end{equation}

\begin{remark}\label{rem:trivialcases}
Although the $\O(n)$-representations $\wedge^{n-1}\R^n$ and $\wedge^1\R^n$ are not isomorphic, their linearizations agree (identifying $\wedge^{n-1}\R^n\cong\wedge^1\R^n\cong\R^n$ as vector spaces) and hence, by Example~\ref{ex:Ricci}, we have that $\mathcal K(R,\wedge^{n-1}\R^n)=\mathcal K(R,\wedge^{1}\R^n)=\mathcal K(R,\R^n)=\Ric_R$. Moreover, $\wedge^0\R^n$, $\wedge^n\R^n$, and $\Sym^0\R^n$ have trivial linearization and hence the corresponding curvature terms $\mathcal K(R,\rho)$ vanish, see Proposition~\ref{prop:Krho}.
\end{remark}

It is a well-known fact in Representation Theory that $\wedge^p \R^n$ is an irreducible $\O(n)$-representation of \emph{real type}, that is, its algebra of equivariant endomorphisms is isomorphic to $\R$. On the other hand, for $p\geq2$, the representation $\Sym^p \R^n$ decomposes as a sum of $\lceil \frac{p+1}{2}\rceil$ irreducible $\O(n)$-representations of real type
\begin{equation}\label{eq:DecompSymP}
\Sym^p\R^n\cong \Sym_0^p\R^n\oplus\Sym_0^{p-2}\R^n\oplus \cdots,
\end{equation}
where the last factor is either $\Sym_0^1\R^n\cong\R^n$ or $\Sym_0^0\R^n\cong\R$, according to the parity of $p$. 
The isomorphism between each irreducible $\Sym^k_0\R^n$ and the corresponding subspace of $\Sym^p\R^n$ is given by (polynomial) multiplication by the $(p-k)/2$ power of $r^2=\sum_{i=1}^n x_i^2\in\Sym^2\R^n$, see \cite[Ex.~19.21]{FultonHarris}. 

The vector bundles over a Riemannian manifold $(M,\g)$ associated to the $\O(n)$-representations $\wedge^p\R^n$ and $\Sym^p\R^n$ are clearly the bundles $\wedge^p TM$ and $\Sym^p TM$, whose sections are, respectively, $p$-forms and symmetric $p$-tensors on $M$.
In both of these cases, there are geometrically canonical Laplacians that satisfy the Weitzenb\"ock formula~\eqref{eq:weitzenbock} as follows, see Besse~\cite[(1.144), (1.154)]{Besse}.

In the case of $\wedge^p TM$, setting $t=2$ in \eqref{eq:weitzenbock}, one obtains the \emph{Hodge Laplacian} $\Delta=\dd \,\dd^*+\dd^*\dd$. A $p$-form $\alpha\in\wedge^p TM$ is \emph{harmonic}, that is, $\Delta\alpha=0$, if and only if it is closed ($\dd\alpha=0$) and coclosed ($\dd^*\alpha=0$). Moreover, by Hodge Theory, the $p$th Betti number of a closed manifold $M$ is precisely $b_p(M)=\dim\ker\Delta$, the dimension of the space of harmonic $p$-forms on $(M,\g)$, for any Riemannian metric $\g$.

In the case of $\Sym^p TM$, setting $t=-2$ in \eqref{eq:weitzenbock}, one obtains the \emph{Lichnerowicz Laplacian} $\Delta$. A symmetric $p$-tensor $\varphi\in\Sym^pTM$ is \emph{harmonic}, that is, $\Delta \varphi=0$, if and only if it is \emph{Killing}, that is, its symmetrized covariant derivative vanishes. In particular, if $p=1$, the space of Killing vector fields $\ker\Delta\subset \Sym^1TM$ is identified with the Lie algebra of the isometry group $\Iso(M,\g)$, provided $(M,\g)$ is complete.

\subsection{The Bochner technique}\label{subsec:Bochner}
If $(M,\g)$ is closed, the classical vanishing theorems of Bochner~\cite{bochner,bochner-yano} follow by exploiting the positivity of $t\,\mathcal K(R,\rho)$ to force the vanishing of harmonic sections $\alpha\in\ker\Delta$, by integrating \eqref{eq:weitzenbock} as follows:
\begin{equation}\label{eq:bochnertech}
0=\int_M \langle \Delta\alpha,\alpha\rangle \,\vol= \int_M \|\nabla\alpha\|^2 +t\, \langle \mathcal K(R,\rho)\alpha,\alpha\rangle\,\vol.
\end{equation}
For instance, in the cases of $\wedge^1 TM$ and $\Sym^1 TM$, using  $t=\pm2$ as appropriate and Example~\ref{ex:Ricci}, the second term in the above integrand is strictly positive for nonzero harmonic $1$-forms if $\Ric>0$, and for nonzero Killing vector fields if $\Ric<0$. This would lead to a contradiction if such harmonic sections did not vanish. Thus, if a closed manifold $(M,\g)$ satisfies $\Ric>0$, then $b_1(M)=0$; while if it satisfies $\Ric<0$, then $\Iso(M,\g)$ is finite.
Clearly, such conclusions remain true if $\Ric\geq0$ and $\Ric>0$ at some point; or, respectively, $\Ric\leq0$ and $\Ric<0$ at some point.

The identity \eqref{eq:bochnertech} also yields strong results if $t\,\mathcal K(R,\rho)$ is only nonnegative, without the strict inequality at any points of $M$. In this case, harmonic sections $\alpha\in\ker\Delta$ are parallel and satisfy $\mathcal K(R,\rho)\alpha=0$. In the above examples, this proves that if a closed manifold $(M,\g)$ satisfies $\Ric\geq0$, then $b_1(M)\leq b_1(T^n)=n$; while if it satisfies $\Ric\leq0$, then $\dim\Iso(M,\g)\leq n$. Rigidity results characterizing the equality cases in the previous statements can also be easily obtained. For more details and applications, including on noncompact manifolds, see Wu~\cite{wu}.

\section{Algebraic characterization of sectional curvature bounds}
\label{sec:characterization}

The goal of this section is to algebraically characterize $\sec_R\geq k$ and $\sec_R\leq k$ in terms of the curvature terms of Weitzenb\"ock formulae for (traceless) symmetric tensors, proving Theorem~\ref{mainthm:algebraicChar}. 
The starting point is to obtain an explicit formula for $\mathcal K(R,\Sym_0^p \R^n)$, cf.~ \cite[Prop.~6.6]{HMS15} and \cite[Thm~1.6]{DS10}.
 
\begin{proposition}\label{prop:IntegralFormula}
There is a constant $c_{p,n}>0$ such that, for all $\varphi,\psi\in\Sym^p_0\R^n$,
\begin{equation}\label{E:Symk}
\left\langle \mathcal K\big(R,\Sym_0^p \R^n\big)\varphi,\,\psi\right\rangle=c_{p,n}\int_{S^{n-1}}R\big(x,\nabla\varphi(x), x, \nabla\psi(x)\big) \,\dd x.
\end{equation}
\end{proposition}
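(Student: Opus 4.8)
The plan is to compute $\langle \mathcal K(R,\Sym^p_0\R^n)\varphi,\psi\rangle$ directly from the defining formula \eqref{eq:krho}, using the polynomial model for symmetric tensors and the expression \eqref{eq:drhoEij} for the linearized action $\dd\rho(E_{ij})\varphi = \left(x_i\tfrac{\partial}{\partial x_j}-x_j\tfrac{\partial}{\partial x_i}\right)\varphi$, and then recognize the resulting algebraic expression as the claimed integral over $S^{n-1}$. First I would fix the orthonormal basis $\{E_{ij}\}_{i<j}$ of $\mathfrak{so}(n)$ and write $\mathcal K(R,\rho) = -\sum_{i<j}\sum_{k<l} R_{ijkl}\,\dd\rho(E_{ij})\dd\rho(E_{kl})$, so that
\begin{equation*}
\langle \mathcal K(R,\rho)\varphi,\psi\rangle = -\sum_{i<j,\,k<l} R_{ijkl}\,\langle \dd\rho(E_{kl})\varphi,\ \dd\rho(E_{ij})^*\psi\rangle = \sum_{i<j,\,k<l} R_{ijkl}\,\langle \dd\rho(E_{kl})\varphi,\ \dd\rho(E_{ij})\psi\rangle,
\end{equation*}
using that each $\dd\rho(E_{ij})$ is skew-adjoint. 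The summand is now symmetric enough that one can replace $\sum_{i<j,k<l}$ by $\tfrac14\sum_{i,j,k,l}$ with $E_{ij}$ extended antisymmetrically; absorbing the combinatorial constant into $c_{p,n}$ is harmless since only the existence and positivity of the constant is asserted.

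The key step is to identify the bilinear expression in $\varphi,\psi$ with an integral. Writing $\nabla\varphi(x) = \big(\tfrac{\partial\varphi}{\partial x_1}(x),\dots,\tfrac{\partial\varphi}{\partial x_n}(x)\big)$, the operator $\dd\rho(E_{ij}) = x_i\partial_j - x_j\partial_i$ is exactly the infinitesimal rotation in the $(i,j)$-plane, i.e. the component $\langle x\wedge \nabla\varphi(x), e_i\wedge e_j\rangle$ up to sign, viewed as a degree-$p$ polynomial in $x$. The natural inner product on $\Sym^p\R^n$ described in the Conventions is, up to a positive constant depending only on $p$ and $n$, the $L^2(S^{n-1})$ inner product of the corresponding homogeneous polynomials restricted to the sphere (this is the standard fact that spherical harmonics of different degrees are $L^2(S^{n-1})$-orthogonal and that the two inner products agree on each $\Sym^k_0$ up to scale; see \cite[Ch.~IX]{Besse} or \cite{FultonHarris}). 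Therefore
\begin{equation*}
\langle \dd\rho(E_{kl})\varphi,\ \dd\rho(E_{ij})\psi\rangle = \widetilde c_{p,n}\int_{S^{n-1}} \big(x_k\partial_l\varphi - x_l\partial_k\varphi\big)(x)\,\big(x_i\partial_j\psi - x_j\partial_i\psi\big)(x)\,\dd x,
\end{equation*}
and summing against $R_{ijkl}$ with the antisymmetrized conventions reassembles precisely $R\big(x,\nabla\varphi(x),x,\nabla\psi(x)\big)$ inside the integral, giving \eqref{E:Symk} with $c_{p,n} = \tfrac14\widetilde c_{p,n} > 0$. One should check that $\nabla\varphi(x)$, which is a priori a vector of degree-$(p-1)$ polynomials, pairs correctly: $R(x,\nabla\varphi(x),x,\nabla\psi(x)) = \sum_{i,j,k,l} R_{ijkl}\, x_i (\partial_j\varphi)(x)\, x_k (\partial_l\psi)(x)$ by multilinearity, which matches the double sum above after the $E_{ij}\leftrightarrow -E_{ji}$ bookkeeping.

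I expect the main obstacle to be the bookkeeping around the two inner products — verifying that the algebraically defined inner product on $\Sym^p\R^n$ from the Conventions agrees, on $\Sym^p_0\R^n$, with the $L^2(S^{n-1})$ inner product up to a single positive constant $c_{p,n}$ independent of $\varphi,\psi$ — and making sure the restriction to $\Sym^p_0$ (harmonic polynomials) is what lets the radial factors $|x|^2 = 1$ on $S^{n-1}$ be dropped cleanly without cross terms. A secondary technical point is tracking signs and factors of $2$ through the antisymmetrization of $\{E_{ij}\}$, but since only positivity of $c_{p,n}$ matters this can be handled loosely. The fact that $R$ need not satisfy the first Bianchi identity causes no trouble here, as the computation only uses that $R\in\Sym^2(\wedge^2\R^n)$.
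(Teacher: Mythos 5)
Your proposal is correct and follows essentially the same route as the paper's proof: skew-adjointness of $\dd\rho(E_{ij})$, the identification $(\dd\rho(E_{ij})\varphi)(x)=\langle x\wedge\nabla\varphi(x),E_{ij}\rangle$, and the proportionality of the algebraic inner product and the $L^2(S^{n-1})$ inner product on $\Sym^p_0\R^n$, which the paper derives from Schur's Lemma since $\Sym^p_0\R^n$ is irreducible of real type. The obstacle you flag is resolved exactly as you suspect: $\dd\rho(E_{ij})$ preserves $\Sym^p_0\R^n$, so both entries of each inner product are harmonic of the same degree $p$ and a single constant $c_{p,n}$ suffices, with no cross terms to worry about.
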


\begin{proof}
By Schur's Lemma, as $\Sym^p_0\R^n$ is an irreducible $\O(n)$-representation of real type, all $\O(n)$-equivariant inner products on $\Sym^p_0\R^n$ are (positive) real multiples of each other.
Thus, there exists a constant $c_{p,n}>0$ such that for all $\varphi,\psi\in\Sym^p_0\R^n$,
\begin{equation}\label{eq:InnProdsSymp0}
\langle \varphi,\psi\rangle = c_{p,n} \int_{S^{n-1}} \varphi(x)\psi(x)\,\dd x.
\end{equation}

Evaluating the linearization \eqref{eq:drhoEij} at some $x=(x_1,\dots, x_n)\in\R^n$, we have
\begin{align*}
\big(\dd\rho(E_{ij})\varphi\big)(x)&=x_i\tfrac{\partial\varphi}{\partial x_j}(x)-x_j\tfrac{\partial\varphi}{\partial x_i}(x)\\
&=\langle x,e_i\rangle\langle\nabla\varphi(x),e_j\rangle-\langle x,e_j\rangle\langle\nabla\varphi(x),e_i\rangle\\
&=\langle x\wedge\nabla \varphi(x),e_i\wedge e_j\rangle\\
&=\langle x\wedge \nabla\varphi(x),E_{ij}\rangle.
\end{align*}
In particular, $\big(\dd\rho(X)\varphi\big)(x)=\langle x\wedge\nabla\varphi(x),X\rangle$ for all $X\in\mathfrak{so}(n)$. Altogether, combining this with \eqref{eq:krho} and \eqref{eq:InnProdsSymp0}, we have that for any $R=\sum_{a,b} R_{ab}\,X_a\otimes X_b$,
\begin{align*}
\left\langle \mathcal K\big(R,\Sym_0^p \R^n\big)\varphi,\,\psi\right\rangle &= \sum_{a,b} R_{ab}\langle \dd\rho(X_a)\varphi,\dd\rho(X_b)\psi\rangle\\
&=c_{p,n}\sum_{a,b} R_{ab} \int_{S^{n-1}} \big(\dd\rho(X_a)\varphi\big)(x)\big(\dd\rho(X_b)\psi\big)(x)\,\dd x\\
&=c_{p,n}\sum_{a,b} R_{ab}\int_{S^{n-1}} \langle x\wedge\nabla\varphi(x),X_a\rangle \langle x\wedge\nabla\psi(x),X_b\rangle\,\dd x\\
&=c_{p,n}\int_{S^{n-1}}\langle R,(x\wedge\nabla\varphi(x))\otimes (x\wedge\nabla\psi(x))\rangle\,\dd x\\
&=c_{p,n}\int_{S^{n-1}}R\big(x,\nabla\varphi(x), x, \nabla\psi(x)\big) \,\dd x. \qedhere
\end{align*}
\end{proof}

An interesting consequence of Proposition~\ref{prop:IntegralFormula} is that $\mathcal K(R,\Sym^p_0 \R^n)=0$ for every $R\in\wedge^4\R^n\subset\Sym^2(\wedge^2\R^n)$, a fact that also follows from Theorem~\ref{mainthm:KRWedgeSymKN}.

Although Theorem~\ref{mainthm:algebraicChar} is stated using curvature terms $\mathcal K(R,\Sym^p_0\R^n)$, $p\geq2$, note that it can be alternatively stated using $\mathcal K(R,\Sym^p\R^n)$, $p\geq2$. Indeed, by Schur's Lemma and \eqref{eq:DecompSymP}, the self-adjoint $\O(n)$-equivariant endomorphism $\mathcal K(R,\Sym^p \R^n)$ is block diagonal, with blocks corresponding to the endomorphisms $\mathcal K(R,\Sym^k_0 \R^n)$, $k=p, p-2, \ldots$. Thus, positive-semidefiniteness for all $p\geq2$ of the endomorphisms $\mathcal K(R,\Sym^p_0\R^n)$ or $\mathcal K(R,\Sym^p\R^n)$ are clearly equivalent.

\begin{proof}[Proof of Theorem \ref{mainthm:algebraicChar}]
Since $\sec_{R}$ depends linearly on $R$, it suffices to prove that $\sec_R\geq0$ if and only if $\mathcal K(R,\Sym^p_0 \R^n)$ is positive-semidefinite for all $p\geq2$.

Setting $\varphi=\psi$ in \eqref{E:Symk}, the integrand in the right-hand side becomes a sectional curvature of $R$. Thus, it follows that $\mathcal K(R,\Sym^p_0 \R^n)$, $p\geq2$, is positive-semidefinite if $\sec_R\geq0$.

In order to prove the converse, let $R\colon\wedge^2\R^n\to\wedge^2\R^n$ be an algebraic curvature operator such that $\mathcal K(R,\Sym^p_0 \R^n)$ is positive-semidefinite for all $p\geq2$.
Given any polynomial $\varphi\colon\R^n\to\R$, consider its decomposition $\varphi=\varphi^\even+\varphi^\odd$ into parts of even and odd degree. After replacing $\varphi$ with a polynomial with the same restriction to the unit sphere $S^{n-1}\subset\R^n$, we may assume that $\varphi^\even$ and $\varphi^\odd$ are homogeneous. Since the function $x\mapsto R\big(x,\nabla \varphi^\even(x),x,\nabla \varphi^\odd(x)\big)$ is odd, its integral over $S^{n-1}$ vanishes, and hence
\begin{align*}
\int_{S^{n-1}}R\big(x,\nabla \varphi(x), x, \nabla \varphi(x)\big) \,\dd x = & \int_{S^{n-1}}R\big(x,\nabla \varphi^\even(x), x, \nabla \varphi^\even(x)\big)\,\dd x\\
& +\int_{S^{n-1}}R\big(x,\nabla \varphi^\odd(x), x, \nabla \varphi^\odd(x)\big) \,\dd x.
\end{align*}
Denoting by $\varphi^\even=\varphi_p+\varphi_{p-2}+\cdots+\varphi_2+\varphi_0$ the  $\O(n)$-irreducible components of $\varphi\in\Sym^p \R^n$ according to the decomposition \eqref{eq:DecompSymP}, we claim that the mixed terms in the bilinear expansion of $\int_{S^{n-1}}R\big(x,\nabla \varphi^\even(x), x, \nabla \varphi^\even(x)\big)\,\dd x$ vanish. Indeed, if $k\neq l$  and $X,Y\in\mathfrak{so}(n)$, then $\dd\rho(X)\varphi_{2k}, \dd\rho(Y)\varphi_{2l}\in\Sym^p(\R^n)$ are in different $\O(n)$-irreducible factors. A computation similar to that in the proof of Proposition~\ref{prop:IntegralFormula} implies that, for any $R=\sum_{a,b} R_{ab}\,X_a\otimes X_b$,
\begin{multline*}
\int_{S^{n-1}}R\big(x,\nabla \varphi_{2k}(x), x, \nabla \varphi_{2l}(x)\big)\,\dd x=\\
=\sum_{a,b} R_{ab} \int_{S^{n-1}} \big(\dd\rho(X_a)\varphi_{2k}\big)(x)\big(\dd\rho(X_b)\varphi_{2l}\big)(x)\,\dd x=0,
\end{multline*}
where the last equality holds since each of the integrals vanishes due to Schur's Lemma.
Therefore, by \eqref{E:Symk}, we have
\begin{equation*}
 \int_{S^{n-1}}R\big(x,\nabla \varphi^\even(x), x, \nabla \varphi^\even(x)\big)\,\dd x=\sum_{k=0}^{p/2}\frac{1}{c_{2k,n}}\langle \mathcal K(R,\Sym_0^{2k} \R^n)\varphi_{2k},\varphi_{2k}\rangle.
\end{equation*}
A similar argument applies to $\varphi^\odd$, so we conclude that for every polynomial $\varphi$, 
\begin{equation}\label{eq:IntPhiNonneg}
\int_{S^{n-1}} R\big(x,\nabla \varphi(x),x,\nabla \varphi(x)\big)\,\dd x\geq0.
\end{equation}
Moreover, \eqref{eq:IntPhiNonneg} holds for any $\varphi\in C^\infty(\R^n)$ by density of polynomials in $C^{\infty}(\R^n)$, e.g., with respect to uniform $C^1$-convergence on compact sets.

In this situation, suppose that $R$ does not satisfy $\sec_R\geq0$. Up to changing the orthonormal basis $\{e_i\}$ of $\R^n$ and rescaling $R$, this is clearly equivalent to assuming that $\sec_R(e_1\wedge e_2)=R(e_1,e_2,e_1,e_2)=-1$. Given $\varepsilon>0$, define the test function
\begin{equation*}
f_\varepsilon\colon \R^n\to \R,\qquad 
f_\varepsilon(x)=\max \{0, \varepsilon^2-|x_2|-\|x-e_1\|^2\}.
\end{equation*}
Note that the support of $f_\varepsilon$ is contained in the $\varepsilon$-ball around $e_1\in\R^n$. Moreover, for any open neighborhood $U\subset\operatorname{supp}f_\varepsilon$ of $e_1$,
we have that
\begin{equation*}
f_\varepsilon\in W^{1,2}(\R^n)\cap C^{\infty}\big(U\setminus\{x_2=0\}\big),
\end{equation*}
that is, $f_\varepsilon$ has square-integrable weak first derivatives in $\R^n$, and is smooth at all $x=(x_1,x_2,\dots, x_n)\in U$ away from the hyperplane $x_2=0$. Moreover, at such points, its gradient can be computed as
\begin{equation*}
\nabla f_\varepsilon (x)= \frac{x_2}{|x_2|} e_2 -2(x-e_1).
\end{equation*}
In particular, if $x\in U\setminus\{x_2=0\}$ is sufficiently close to $e_1$, then $\nabla f_\varepsilon(x)$ can be made arbitrarily close to $\pm e_2$.
By continuity of the sectional curvature function of $R$, this means that $R(x,\nabla f_\varepsilon(x),x,\nabla f_\varepsilon(x))$ is arbitrarily close to $-1$.
Up to choosing a smaller $\varepsilon>0$, we may hence assume that 
\begin{equation*}
\int_{S^{n-1}}R\big(x,\nabla f_\varepsilon(x), x, \nabla f_\varepsilon(x)\big)\,\dd x<0.
\end{equation*}
By the density of $C^{\infty}(\R^n)$ in $W^{1,2}(\R^n)$, there is a sequence of smooth functions converging to $f_\varepsilon$ in $W^{1,2}(\R^n)$.
Elements in this sequence that are sufficiently close to $f_\varepsilon$ in $W^{1,2}(\R^n)$ must hence violate \eqref{eq:IntPhiNonneg}, giving the desired contradiction.
\end{proof}


\begin{remark}\label{rem:Berger}
As communicated to us by M.~Gursky, it was observed by M.~Berger, cf.~\cite[Thm.~16.9]{Besse}, that if $\mathcal K(R,\Sym^2_0 \R^n)$ is positive-semidefinite, then so is $\mathcal K(R,\Sym^1\R^n)$. Namely, since the latter is precisely the Ricci tensor of $R$, see Example~\ref{ex:Ricci}, it can be diagonalized with an orthonormal basis $\{v_i\}$ of $\R^n$, that is, $\mathcal K(R,\Sym^1\R^n)(v_i)=\Ric_R(v_i)=\lambda_i v_i$.
It follows from \eqref{eq:drhoEij} that the linearization of the $\O(n)$-representation $\Sym^2_0\R^n$ acts on $\varphi_m=v_m\vee v_m - \frac{1}{n}\,\g\in\Sym^2_0\R^n$ as
\begin{equation*}
\dd\rho(v_i\wedge v_j)(\varphi_m)=\left( x_i\tfrac{\partial}{\partial x_j}-x_j\tfrac{\partial}{\partial x_i}\right)\left(x_m^2-\tfrac{1}{n}\textstyle\sum_{i=1}^n x_i^2\right)=2\delta_{jm}x_i x_m-2\delta_{ik}x_j x_k,
\end{equation*}
where we denote by $(x_1,\dots,x_n)$ the coordinate system in $\R^n$ defined by $\{v_i\}$. Combining \eqref{eq:krho} with the above, we obtain:
\begin{multline*}
\langle \mathcal K(R,\Sym^2_0 \R^n)(\varphi_m),\varphi_m\rangle = \sum_{i<j} \sum_{k<l} R_{ijkl}\langle \dd\rho(v_i\wedge v_j)\varphi_m,\dd\rho(v_k\wedge v_l)\varphi_m\rangle\\
=4\sum_{i<j}\sum_{k<l} R_{ijkl} \langle \delta_{jm}x_i x_m - \delta_{im}x_j x_m,\delta_{lm}x_k x_m - \delta_{km}x_l x_m\rangle\\
=4\sum_{i<j}\sum_{k<l} R_{ijkl}(\delta_{jm}\delta_{lm}\delta_{ik}-\delta_{jm}\delta_{km}\delta_{il}-\delta_{im}\delta_{lm}\delta_{jk}+\delta_{im}\delta_{km}\delta_{jl})\\
=4\sum_{1\leq i<m} R_{imim}+4\sum_{m<i\leq n} R_{mimi}
=4\,\Ric_R(v_m,v_m)=4\,\lambda_m.
\end{multline*}
Thus, positive-semidefiniteness of $\mathcal K(R,\Sym^2_0 \R^n)$ clearly implies positive-semidefi\-niteness of $\mathcal K(R,\Sym^1\R^n)=\Ric_R$.
Given the above, it is natural to investigate whether positive-semidefiniteness of
$\mathcal K(R,\Sym^{p+1}_0 \R^n)$ and $\mathcal K(R,\Sym^{p}_0 \R^n)$ can also be related for $p\geq2$. In principle, this could shed light on further algebraic characterizations of $\sec\geq0$ that are more powerful than the one stated in Theorem~\ref{mainthm:algebraicChar}. 
\end{remark}

\section{Kulkarni--Nomizu algebras}
\label{sec:KNalgebras}

In this section, we recall the classical Kulkarni--Nomizu product of symmetric tensors on exterior powers, and develop an analogous product for symmetric tensors on symmetric powers. We then analyze representation theoretic aspects of the corresponding algebras, in preparation for proving Theorem~\ref{mainthm:KRWedgeSymKN}.

\subsection{Kulkarni--Nomizu products}\label{subsec:KNproducts} 
The classical Kulkarni--Nomizu product is the map 
$\owedge$ that to each $h,k\in\Sym^2(\R^n)$ associates $h\owedge k\in\Sym^2(\wedge^2\R^n)$ defined by
\begin{equation}\label{eq:KNproduct2}
\begin{aligned}
(h\owedge k)(x\wedge y,z\wedge w)&=h(x,z)k(y,w)+h(y,w)k(x,z)\\ &\quad -h(x,w)k(y,z)-h(y,z)k(x,w)
\end{aligned}
\end{equation}
on decomposable $2$-forms, and extended by linearity. 
This operation simplifies the algebraically manipulation of curvature operators, e.g., the curvature operator of a manifold with constant curvature $k$ is simply $\frac{k}{2}\,\g\owedge\g$. Moreover, it can be used to decompose the space of modified algebraic curvature operators into its $\O(n)$-irreducible components $\Sym^2(\wedge^2\R^n)=\mathcal U\oplus\mathcal L\oplus\mathcal W\oplus\wedge^4\R^n$ via successive ``divisions" by $\g$; namely $R=R_\mathcal U+R_\mathcal L+R_\mathcal W+R_{\wedge^4}$, where
\begin{equation}\label{eq:RURL}
R_\mathcal U=\frac{\scal}{2n(n-1)} \,\g\owedge\g, \qquad R_\mathcal L=\frac{1}{n-2}\, \g\owedge\left(\Ric-\frac{\scal}{n}\right)\!,
\end{equation}
and $R_\mathcal W$ and $R_{\wedge^4}$ do not have $\g$ factors, see~\cite[(1.116)]{Besse}. In this context, the curvature operator $R$ is respectively scalar flat, Einstein, conformally flat, or satisfies the first Bianchi identity if and only if $R_\mathcal U$, $R_\mathcal L$, $R_\mathcal W$, or $R_{\wedge^4}$ vanishes.

As observed by Kulkarni~\cite{Kulkarni72}, the Kulkarni--Nomizu product \eqref{eq:KNproduct2} is a special case of the product $\owedge$ in the commutative graded algebra
\begin{equation}
\mathcal C:=\bigoplus_{p=0}^n \Sym^2(\wedge^p \R^n)
\end{equation}
induced from the exterior algebra $\bigoplus_{p=0}^n \wedge^p\R^n$ as follows. For $\alpha,\beta\in\wedge^p\R^n$ and $\gamma,\delta\in\wedge^q\R^n$, let
\begin{equation*}
(\alpha \otimes \beta) \owedge (\gamma\otimes\delta) := (\alpha \wedge \gamma)\otimes (\beta\wedge\delta).
\end{equation*}
Extend the above by linearity to a product on $\bigoplus_{p=0}^n (\wedge^p \R^n)^{\otimes 2}$, and note that $\mathcal C$ is invariant under this product. Henceforth, this product $\owedge$ on $\mathcal C$ will also be called the (classical) Kulkarni--Nomizu product.

A convenient way to express the curvature term in Weitzenb\"ock formulae for symmetric tensors (used, e.g., in Theorem~\ref{mainthm:KRWedgeSymKN}) involves a symmetric analogue of the  above Kulkarni--Nomizu product. Consider the commutative graded algebra
\begin{equation}
\mathcal A:=\bigoplus_{p=0}^\infty \Sym^2(\Sym^p \R^n)
\end{equation}
endowed with the product $\ovee$ induced from the symmetric algebra $\bigoplus_{p=0}^\infty \Sym^p\R^n$ as follows.
For $\alpha,\beta\in\Sym^p\R^n$ and $\gamma,\delta\in\Sym^q\R^n$, let
\begin{equation*}
(\alpha \otimes \beta) \ovee (\gamma\otimes\delta) := (\alpha \vee \gamma)\otimes (\beta\vee\delta).
\end{equation*}
Extend the above by linearity to a product on $\bigoplus_{p=0}^\infty (\Sym^p \R^n)^{\otimes 2}$, and note that the subspace $\mathcal A$ is invariant under this product.
Due to the reducibility of $\Sym^p\R^n$, $p\geq2$, as an $\O(n)$-representation, it is useful to study the subspace
\begin{equation}
\mathcal A_0:=\bigoplus_{p=0}^\infty \Sym^2(\Sym^p_0 \R^n).
\end{equation}
Consider the projection $\pi\colon\mathcal A\to\mathcal A_0$ obtained by extending by linearity the map
\begin{equation}\label{eq:piAA0}
\pi(\varphi\otimes \psi):=\varphi_0\otimes \psi_0, \quad \varphi,\psi\in\Sym^p\R^n,
\end{equation}
to all of $\mathcal A$. Define a product on $\mathcal A_0$, also denoted by $\ovee$, by setting for all $a, b\in\mathcal A_0$,
\begin{equation}\label{eq:prodA0}
a \ovee b := \pi(a \ovee b).
\end{equation}

\begin{proposition}
In the above notation, $(\mathcal A_0,\ovee)$ is a commutative graded algebra.
\end{proposition}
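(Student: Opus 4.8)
The plan is to verify the three defining properties of a commutative graded algebra for $(\mathcal{A}_0, \ovee)$: that $\ovee$ is well-defined and bilinear on $\mathcal{A}_0$, that it respects the grading, and that it is commutative and associative. The key technical fact that makes everything work is that the projection $\pi \colon \mathcal{A} \to \mathcal{A}_0$ is $\O(n)$-equivariant (being the orthogonal projection onto a subrepresentation summed factorwise), and in particular $\mathcal{A}_0 = \ker(\id - \pi)$ is preserved under the relevant operations. Bilinearity and the grading property are immediate from the definitions, since $\pi$ is linear and maps $\Sym^2(\Sym^{p+q}\R^n)$ to $\Sym^2(\Sym^{p+q}_0\R^n)$; likewise commutativity of $\ovee$ on $\mathcal{A}_0$ follows at once from commutativity of $\ovee$ on $\mathcal{A}$ (which in turn comes from commutativity of the symmetric algebra $\bigoplus_p \Sym^p\R^n$) together with linearity of $\pi$.

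The crux is associativity: one must show $(a \ovee b) \ovee c = a \ovee (b \ovee c)$ for $a,b,c \in \mathcal{A}_0$, where the outer and inner $\ovee$'s on the left denote $\pi(\pi(a \ovee b) \ovee c)$, and similarly on the right. First I would reduce this, using that $\ovee$ on $\mathcal{A}$ is already associative (inherited from the symmetric algebra), to the statement
\begin{equation*}
\pi\big(\pi(a \ovee b) \ovee c\big) = \pi\big(a \ovee b \ovee c\big) = \pi\big(a \ovee \pi(b \ovee c)\big)
\end{equation*}
for $a,b,c \in \mathcal{A}_0$. So it suffices to prove the following ``$\pi$ commutes with projecting one factor'' lemma: for any $u \in \mathcal{A}$ and $c \in \mathcal{A}_0$, one has $\pi(\pi(u) \ovee c) = \pi(u \ovee c)$, and symmetrically on the other side. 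Equivalently, writing $u = \pi(u) + (u - \pi(u))$, it is enough to show that if $v \in \mathcal{A}$ lies in the orthogonal complement of $\mathcal{A}_0$ inside $\mathcal{A}$ — i.e.\ $\pi(v) = 0$ — then $\pi(v \ovee c) = 0$ whenever $c \in \mathcal{A}_0$.

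The main obstacle, then, is this last claim, and here is where the structure of $\Sym^p\R^n$ as an $\O(n)$-representation enters. By the conventions and decomposition \eqref{eq:DecompSymP}, the orthogonal complement of $\Sym^p_0\R^n$ in $\Sym^p\R^n$ is exactly $r^2 \cdot \Sym^{p-2}\R^n$, where $r^2 = \g$. Correspondingly, a $v \in \Sym^2(\Sym^p\R^n)$ with $\pi(v) = 0$ is (a limit of sums of) terms of the form $\varphi \otimes \psi + \psi\otimes\varphi$ where at least one of $\varphi, \psi$ is divisible by $r^2$, i.e.\ lies in $r^2 \Sym^{p-2}\R^n$. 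The decisive observation is that multiplication by $r^2$ is an ideal-type operation with respect to $\vee$: if $\varphi = r^2 \varphi'$, then $\varphi \vee \gamma = r^2(\varphi' \vee \gamma)$ is again divisible by $r^2$. Hence $v \ovee c$ (for $c = \sum \gamma \otimes \delta + \delta\otimes\gamma \in \mathcal{A}_0$) is again a sum of tensors $\varphi\vee\gamma \otimes \psi\vee\delta$ in which one slot is divisible by $r^2$, so it lies in the kernel of $\pi$, giving $\pi(v \ovee c) = 0$. Concretely, what I would isolate and prove is that the subspace $\mathcal{I} := \bigoplus_p \big[(r^2\Sym^{p-2}\R^n)\otimes\Sym^p\R^n + \Sym^p\R^n\otimes(r^2\Sym^{p-2}\R^n)\big] \cap \mathcal{A}$ is an ideal of $(\mathcal{A}, \ovee)$, and that $\ker \pi = \mathcal{I}$; associativity of $(\mathcal{A}_0, \ovee)$ then follows formally, since $\mathcal{A}_0 \cong \mathcal{A}/\mathcal{I}$ as algebras and $\pi$ is the quotient map composed with the vector-space splitting. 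Commutativity and the grading pass to the quotient automatically, completing the proof.
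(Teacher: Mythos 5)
Your proof is correct and follows essentially the same route as the paper: the paper likewise reduces everything to showing that $\ker\pi$ is a graded ideal of $(\mathcal A,\ovee)$, identifies $\ker\pi$ as the span of symmetric tensors with one slot divisible by $r^2$, and verifies the ideal property by the same observation that $\vee$-multiplication preserves divisibility by $r^2$.
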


\begin{proof}
Since $\mathcal A$ is a commutative graded algebra, it suffices to show that  $\ovee$ is well-defined on $\mathcal A_0$, that is, that $\ker \pi$ is a graded ideal. 
Since, for each $p$,
\begin{equation*}
\Sym^p\R^n=\Sym^p_0\R^n\oplus (r^2 \vee \Sym^{p-2}\R^n),
\end{equation*}
it follows that
\begin{equation*}
\begin{aligned}
\ker \pi&=\bigoplus_{p=0}^\infty \Sym^2(r^2\vee \Sym^{p-2}\R^n)\oplus \big(\!\Sym^p_0\R^n \otimes (r^2\vee \Sym^{p-2}\R^n)\big)\\
&=\operatorname{span}\big\{(r^2\vee \xi) \otimes \zeta + \zeta \otimes (r^2\vee \xi), \xi\in\Sym^{p-2}\R^n, \zeta \in \Sym^p\R^n, p\geq 2\big\}.
\end{aligned}
\end{equation*}
Given $a\in\mathcal A$ and $b\in\ker \pi$, we claim that $a\ovee b\in\ker\pi$.
By linearity, we may assume that
\begin{equation*}
\begin{aligned}
a&=\varphi\otimes \psi+\psi\otimes\varphi,& &\varphi,\psi\in\Sym^p\R^n,\\
b&=(r^2\vee \xi) \otimes \zeta + \zeta \otimes (r^2\vee \xi),& &\xi\in\Sym^{p-2}\R^n, \zeta \in \Sym^p\R^n.
\end{aligned}
\end{equation*}
Then
\begin{equation*}
\begin{aligned}
a\ovee b&=(r^2\vee \varphi\vee \xi) \otimes (\psi \vee \zeta) + (\varphi\vee\zeta)\otimes (r^2\vee \psi\vee\xi)  \\
&\quad +(r^2\vee \psi\vee\xi)\otimes (\varphi\vee\zeta) + (\psi\vee\zeta)\otimes (r^2\vee\varphi\vee\xi)\in\ker\pi.\qedhere
\end{aligned}
\end{equation*}
\end{proof}

A simple (yet important) observation is that the operation $(\cdot)\ovee \g$ of multiplying by $\g$ is $\O(n)$-equivariant and preserves positive-semidefiniteness of endomorphisms.

\begin{example}\label{ex:gpowers}
The metric $\g=\sum_{i=1}^n e_i\otimes e_i\in\Sym^2(\R^n)$ is simultaneously an element of the algebras $(\mathcal C,\owedge)$, $(\mathcal A,\ovee)$, and $(\mathcal A_0,\ovee)$, and its $p$th power is exactly $p!\id$.

More precisely, in the algebra $(\mathcal C,\owedge)$, cf.~\cite{Kulkarni72,Labbi15},
\begin{equation}\label{eq:gpowedge}
\begin{aligned}
\g^{\owedge p} &=\sum_{i_1,\dots, i_p=1}^n (e_{i_1}\wedge \dots \wedge e_{i_p}) \otimes (e_{i_1}\wedge \dots \wedge e_{i_p})\\
&=p!\sum_{i_1<\dots< i_p}(e_{i_1}\wedge \dots \wedge e_{i_p}) \otimes (e_{i_1}\wedge \dots \wedge e_{i_p})\\
&=p! \,\id_{\wedge^p\R^n}.
\end{aligned}
\end{equation}
To deal with the case of $(\mathcal A,\ovee)$, we need to introduce multi-indices $\ell=(\ell_1,\dots,\ell_n)$, $\ell_i\geq0$, and we write $\ell!:=\ell_1!\dots\ell_n!$, $|\ell|=\ell_1+\dots+\ell_n$, and $x^\ell:=x_1^{\ell_1} \dots x_n^{\ell_n}\in\Sym^{|\ell|}\R^n$. Note that $\|x^\ell\|^2=\ell!$, and therefore we have the analogous result
\begin{equation}\label{eq:gpovee}
\begin{aligned}
\g^{\ovee p} &=\sum_{i_1,\dots, i_p=1}^n (e_{i_1}\vee \dots \vee e_{i_p}) \otimes (e_{i_1}\vee \dots \vee e_{i_p})\\
&=\sum_{|\ell|=p} \tfrac{p!}{l!} x^\ell \otimes x^\ell\\
&=p! \,\id_{\Sym^p\R^n}.
\end{aligned}
\end{equation}
Finally, \eqref{eq:piAA0}, \eqref{eq:prodA0}, and \eqref{eq:gpovee} imply that, in $(\mathcal A_0,\ovee)$, we also have
\begin{equation}\label{eq:gpovee0}
\g^{\ovee p}=p! \,\pi(\id_{\Sym^p\R^n})=p!\,\id_{\Sym^p_0\R^n}.
\end{equation}
Henceforth, to simplify notation, we denote the above powers \eqref{eq:gpowedge}, \eqref{eq:gpovee}, and \eqref{eq:gpovee0} by $\g^p$ when it is clear from the context which of the algebras $\mathcal A$, $\mathcal A_0$, or $\mathcal C$ is meant.
\end{example}

\subsection{Representation theory}\label{subsec:reptheory}
One of the steps to prove Theorem~\ref{mainthm:KRWedgeSymKN} is to analyze the decomposition of $\Sym^2(\wedge^p \R^n)$ and $\Sym^2(\Sym^p_0\R^n)$ into irreducible $\O(n)$-representa\-tions. Namely, we show that they contain at most one factor isomorphic to each of the irreducible factors $\mathcal U$, $\mathcal L$, $\mathcal W$, and $\wedge^4\R^n$ of the $\O(n)$-representation $\Sym^2(\wedge^2\R^n)$.
Indeed, if $p=0$, then $\Sym^2(\wedge^0 \R^n)=\Sym^2(\Sym^0_0\R^n)=\mathcal U$ is the trivial representation; and if $p=1$, then
\begin{equation*}
\Sym^2(\wedge^1 \R^n)=\Sym^2(\Sym^1_0\R^n)=\Sym^2(\R^n)\cong \mathcal U\oplus\mathcal L.
\end{equation*}
Moreover, it is also easy to see that $\Sym^2(\wedge^{n-1}\R^n)\cong \mathcal U\oplus\mathcal L$ and $\Sym^2(\wedge^{n}\R^n)\cong \mathcal U$. However, this analysis is substantially more involved if $p\geq2$ in the case of $\Sym^p\R^n$, and $2\leq p\leq n-2$ in the case of $\wedge^p\R^n$, and it is carried out using the so-called \emph{Weyl's construction}, see Lemmas~\ref{lemma:onecopySym} and \ref{lemma:onecopyWedge}. We now give a brief summary of this method; for details see \cite{FultonHarris}.

Irreducible $\GL(n,\C)$-representations are labeled in terms of integer partitions $\lambda$ of $k\in\mathds N$ by applying their \emph{Schur Functor} $\Ss_\lambda$ to the defining representation $\C^n$. Each such partition $\lambda$ determines an idempotent element $c_\lambda$ in the group algebra $\C \mathfrak S_k$ called its \emph{Young symmetrizer}, where $\mathfrak S_k$ denotes the group of permutations in $k$ letters. Using the natural action of $\C \mathfrak S_k$ on the $k$th tensor power $(\C^n)^{\otimes k}$, define $\Ss_\lambda \C^n:=c_\lambda\cdot (\C^n)^{\otimes k}\subset (\C^n)^{\otimes k}$. For instance, the partition $\lambda=(1,1,\dots,1)$ of $k$, that is, $k=1+1+\dots+1$, gives rise to the exterior power $\Ss_\lambda\C^n\cong \wedge^k\C^n$, while the trivial partition $\mu=(k)$, that is, $k=k$, gives rise to the symmetric power $\Ss_\mu\C^n\cong\Sym^k\C^n$.

The restriction of the irreducible $\GL(n,\C)$-representation $\Ss_\lambda \C^n$ to $\O(n,\C)$ is, in general, reducible. 
Intersecting $\Ss_\lambda\C^n$ with the kernels of all contraction maps $(\C^n)^{\otimes k}\to (\C^n)^{\otimes (k-2)}$ yields an $\O(n,\C)$-irreducible factor, denoted by $\Ss_{[\lambda]}\C^n$. 
Similarly to the case of $\GL(n,\C)$, this gives a one-to-one correspondence between integer partitions $\lambda$ and irreducible $\O(n,\C)$-representations. 
For instance, we have that $\Ss_{[(1,1,\dots,1)]}\C^n=\Ss_{(1,1,\dots,1)}\C^n\cong\wedge^k\C^n$ and $\Ss_{[(k)]}\C^n\cong\Sym^k_0\C^n$.


It is well-known that the irreducible factors $\mathcal U$, $\mathcal L$, $\mathcal W$, and $\wedge^4\R^n$ of the $\O(n)$-representation $\Sym^2(\wedge^2\R^n)$ are of \emph{real type}, that is, the algebra of equivariant automorphisms consists of all real scalar multiples of the identity. In particular, their complexifications are $\O(n,\C)$-irreducible and hence of the form $\Ss_{[\lambda]}\C^n$ for some integer partition $\lambda$. Namely, $\mathcal U_\C \cong\C$ is the trivial representation (corresponding to the empty partition), $\mathcal L_\C\cong \Ss_{[(2)]}\C^n$ corresponds to the partition $2=2$, $\mathcal W_\C\cong \Ss_{[(2,2)]}\C^n$ corresponds to the partition $4=2+2$, and $(\wedge^4 \R^n)_\C=\wedge^4\C^n\cong\Ss_{[(1,1,1,1)]}\C^n$ corresponds to the partition $4=1+1+1+1$.


The tensor product of two Schur Functors decomposes into a sum of Schur Functors according to the \emph{Littlewood--Richardson Rule}, see \cite[(6.7)]{FultonHarris}:
\begin{equation}\label{E:LRrule}
\Ss_\lambda\otimes\Ss_\mu=\bigoplus_\nu N_{\lambda\mu\nu}\Ss_\nu.
\end{equation}
The multiplicities $N_{\lambda\mu\nu}$ (collectively called \emph{Littlewood--Richardson numbers}) are defined combinatorially from the partitions $\lambda$, $\mu$, and $\nu$, as the number of ways one may achieve the Young diagram of $\nu$ by performing a strict $\mu$-expansion to the Young diagram of $\lambda$, see \cite[p.\ 456]{FultonHarris}. 
Furthermore, in order to decompose the restriction of an irreducible $\GL(n,\C)$-representation to $\O(n,\C)$, we use \emph{Littlewood's Restriction Formula}, see \cite[(25.37)]{FultonHarris} and \cite{Littlewood44}:
\begin{equation}
\label{E:restriction}
\mathrm{Res}^{\GL(n,\C)}_{\O(n,\C)}(\Ss_\nu \C^n)=\bigoplus_{\bar{\lambda}} N_{\nu\bar{\lambda}}\Ss_{[\bar{\lambda}]}\C^n,
\end{equation}
where $N_{\nu\bar{\lambda}}=\sum_\delta N_{\delta\bar{\lambda}\nu}$ is the sum of the corresponding Littlewood--Richardson numbers over the partitions $\delta$ with even parts. 

\begin{lemma}
\label{lemma:onecopySym}
The $\O(n)$-representation $\Sym^2(\Sym_0^p \R^n)$ has exactly one irreducible factor isomorphic to each of $\mathcal U$, $\mathcal L$, and $\mathcal W$, and no irreducible factors isomorphic to $\wedge^4\R^n$, for all $p\geq2$, $n\geq4$. 
\end{lemma}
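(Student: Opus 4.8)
The plan is to compute the decomposition of $\Sym^2(\Sym^p_0\R^n)$ into irreducible $\O(n,\C)$-representations using Weyl's construction, as outlined in the excerpt, and then simply read off the multiplicities of the four partitions corresponding to $\mathcal U$, $\mathcal L$, $\mathcal W$, and $\wedge^4\R^n$. Concretely, since $\Sym^p_0\R^n$ has complexification $\Ss_{[(p)]}\C^n$, I would first use the $\GL(n,\C)$-decomposition of $\Sym^2$ of a symmetric power: it is classical (a plethysm that can be extracted from the Littlewood–Richardson rule, or cited directly) that
\begin{equation*}
\Sym^2\big(\Sym^p\C^n\big)=\bigoplus_{j=0}^{\lfloor p/2\rfloor}\Ss_{(2p-2j,\,2j)}\C^n,
\end{equation*}
i.e. a multiplicity-free sum over partitions of $2p$ into two even parts. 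Strictly I want $\Sym^2$ of the \emph{traceless} part $\Ss_{[(p)]}\C^n$, not of $\Sym^p\C^n$; but removing the $r^2$-factors only changes the decomposition by terms whose highest weight is strictly shorter (partitions with fewer boxes), so the partitions of $2p$, $2p-1$, $\dots$ relevant after the $\O(n)$-restriction are accounted for correctly — alternatively one works directly with $\Ss_{[(p)]}$ throughout. The second step is to apply Littlewood's Restriction Formula \eqref{E:restriction} to each $\Ss_{(2p-2j,2j)}\C^n$: since both parts of $(2p-2j,2j)$ are even, when we strip off a partition $\delta$ with even parts the remaining partition $\bar\lambda$ can be $\emptyset$ (giving $\mathcal U$, once), $(2)$ after removing $(2p-2,2j)$ or similar (giving $\mathcal L$), $(2,2)$ (giving $\mathcal W$), and — crucially — there is no way to obtain $(1,1,1,1)$ because removing an even-part partition from a partition with at most two rows and even parts cannot leave the column-shape $(1,1,1,1)$, which has four rows. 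That is precisely why $\wedge^4\R^n$ does not occur.

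So the key steps, in order, are: (1) identify $\big(\Sym^2(\Sym^p_0\R^n)\big)_\C$ with $\Sym^2\big(\Ss_{[(p)]}\C^n\big)$ and reduce, via the $r^2$-filtration, to understanding $\Sym^2\big(\Ss_{(p)}\C^n\big)$ together with lower-box correction terms; (2) invoke the plethysm $\Sym^2(\Sym^p\C^n)=\bigoplus_{j}\Ss_{(2p-2j,2j)}\C^n$; (3) apply Littlewood's Restriction Formula \eqref{E:restriction} to each two-row summand, using the even-parts condition on $\delta$ together with the Littlewood–Richardson combinatorics to count how many times each of $\emptyset$, $(2)$, $(2,2)$, $(1^4)$ appears; (4) collect the answer: multiplicity one for each of $\mathcal U$, $\mathcal L$, $\mathcal W$, and zero for $\wedge^4\R^n$. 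A sanity check for small $n,p$ (e.g. $p=2$, where $\Sym^2(\Sym^2_0\R^n)\cong\mathcal U\oplus\mathcal L\oplus\mathcal W$ is well known) confirms the count and pins down the constants implicitly.

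The main obstacle I anticipate is step (3): bookkeeping the Littlewood–Richardson numbers $N_{\delta\bar\lambda\nu}$ where $\nu=(2p-2j,2j)$, $\bar\lambda\in\{\emptyset,(2),(2,2),(1,1,1,1)\}$, and $\delta$ ranges over partitions with even parts. One must argue carefully that, summed over all the two-row $\nu$ appearing in the plethysm, each of $(2)$ and $(2,2)$ is produced exactly once and $\emptyset$ exactly once, which requires knowing which $\nu$ contribute and with what multiplicity — this is where the even-parts restriction and the two-row shape of $\nu$ do the real work. The non-appearance of $(1,1,1,1)$ is comparatively easy: a partition with at most two rows (hence at most two columns after conjugation is irrelevant here) cannot yield, after removing boxes to form an even-part partition $\delta$, the four-row shape $(1,1,1,1)$, since $\bar\lambda\subseteq\nu$ would force $\bar\lambda$ to have at most two rows. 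I would also need to handle the $r^2$-filtration cleanly — verifying that the correction terms coming from $\Sym^p\C^n=\Ss_{[(p)]}\oplus r^2\Ss_{[(p-2)]}\oplus\cdots$ do not spuriously introduce extra copies of $\mathcal U$, $\mathcal L$, or $\mathcal W$ — which is most safely done by induction on $p$ using the previously-established cases, or by noting these corrections only affect summands $\Ss_{(a,b)}$ with $a+b<2p$ and tracking them in parallel.
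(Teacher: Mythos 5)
Your overall strategy is the same as the paper's: complexify, reduce to $\Sym^2(\Ss_{[(p)]}\C^n)$, use the plethysm $\Sym^2(\Sym^p\C^n)=\bigoplus_{a}\Ss_{(p+a,p-a)}\C^n$ (over $a$ with $p+a$ even), and apply Littlewood's restriction formula to the two-row summands. Your argument for the absence of $\wedge^4\R^n$ is correct and matches the paper's: $\bar\lambda=(1,1,1,1)$ has four rows and so cannot contribute to a restriction of a partition with at most two rows. The count of the trivial factor is also unproblematic (the paper gets it for free from Schur's lemma, since $\Ss_{[(p)]}$ is irreducible).

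The gap is in step (3), precisely at the point you flag as the "main obstacle": the claim that, summed over the two-row summands of $\Sym^2(\Sym^p\C^n)$, each of $(2)$ and $(2,2)$ is produced exactly once is false. Already for $p=2$, $\Sym^2(\Sym^2\C^n)=\Ss_{(4)}\oplus\Ss_{(2,2)}$ and each summand restricts with one copy of $\Ss_{[(2)]}$, so the multiplicity of $\mathcal L_\C$ in $\Sym^2(\Sym^p\C^n)$ is $2$, not $1$; in general it is $p$ (each interior summand $0<a<p$ contributes two copies, the extremes one each), and the multiplicity of $\Ss_{[(2,2)]}$ is $\lfloor p/2\rfloor$. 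The multiplicity-one statement holds only for $\Sym^2(\Sym^p_0\C^n)$, and passing from $\Sym^p$ to $\Sym^p_0$ is not a matter of lower-degree terms "not interfering": in the virtual identity
\[
\Sym^2(\Ss_{[(p)]})=\Sym^2(\Sym^p)\ominus\Sym^2(\Sym^{p-2})\ominus(\Sym^p\otimes\Sym^{p-2})\oplus(\Sym^{p-2}\otimes\Sym^{p-2}),
\]
each correction term contains on the order of $p$ copies of $\Ss_{[(2)]}$ and $\Ss_{[(2,2)]}$, and multiplicity one emerges only from the inclusion--exclusion $p-(p-2)-\tfrac{3p-4}{2}+\tfrac{3p-6}{2}=1$ (and its analogue for $(2,2)$). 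So the $r^2$-filtration bookkeeping you defer to the end is not a verification that nothing "spurious" appears; it is the core computation, and as written your step (3) yields the wrong multiplicities. The fix is the parallel tracking you mention as an alternative: decompose all four terms above by Littlewood--Richardson, count copies of $\Ss_{[(2)]}$ and $\Ss_{[(2,2)]}$ in each via the restriction formula, and combine.
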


\begin{proof}
Since their complexifications are $\O(n,\C)$-irreducible, 
the number of factors isomorphic to $\mathcal U$, $\mathcal L$, $\mathcal W$, and $\wedge^4\R^n$ that appear in $\Sym^2(\Sym^p_0 \R^n)$ is equal to the number of factors isomorphic to $\mathcal U_\C$, $\mathcal L_\C$, $\mathcal W_\C$, and $\wedge^4\C^n$ that appear in $\Sym^2(\Sym^p_0 \C^n)=\Sym^2(\Ss_{[(p)]}\C^n)$, respectively. 

To simplify notation, we henceforth drop the symbol $\C^n$ and only write $\Sym^p$ for $\Sym^p \C^n$, $\Ss_\lambda$ for $\Ss_\lambda \C^n$, etc. Furthermore, we use the formalism of virtual representations, that is, sums $\oplus$ as well as differences $\ominus$ of representations.

First, recall $\Sym^p=\Ss_{[(p)]}\oplus\Ss_{[(p-2)]}\oplus\cdots=\Ss_{[(p)]}\oplus\Sym^{p-2}$, cf.~\eqref{eq:DecompSymP}. Therefore
\begin{equation}
\Sym^2(\Ss_{[(p)]})=\Sym^2(\Sym^p)\ominus \Sym^2(\Sym^{p-2})\ominus (\Ss_{[(p)]}\otimes\Sym^{p-2}). 
\end{equation}
Since the last term equals 
$\Sym^p\otimes\Sym^{p-2}\ominus\Sym^{p-2}\otimes\Sym^{p-2}$,
we have that
\begin{equation}
\label{E:S2traceless}
\begin{aligned}
\Sym^2(\Ss_{[(p)]})= & \Sym^2(\Sym^p)\ominus\Sym^2(\Sym^{p-2})\\
& \ominus (\Sym^p\otimes\Sym^{p-2})\oplus(\Sym^{p-2}\otimes\Sym^{p-2}).
\end{aligned}
\end{equation}
All the terms in the right hand side of the above equation are restrictions to $\O(n,\C)$ of $\GL(n,\C)$-representations, so that \eqref{E:LRrule} and \eqref{E:restriction} can be used to count the number of copies of the desired representations in each term. More precisely, it follows from the Littlewood--Richardson rule \eqref{E:LRrule}, see \cite[Exercise 6.16, p.~81]{FultonHarris}, that
\begin{equation*}
\begin{aligned}
&\Sym^2(\Sym^p) =\bigoplus_{\substack{0\leq a\leq p \\ p+a\text{ even} }}\Ss_{(p+a,p-a)}, 
& \Sym^2(\Sym^{p-2}) =\bigoplus_{\substack{0\leq a\leq p-2 \\ p+a\text{ even} }}\Ss_{(p+a-2,p-a-2)},\\
&\Sym^p \otimes\Sym^{p-2} = \bigoplus_{a=0}^{p-2} \Ss_{(p+a,p-a-2)}, & \Sym^{p-2} \otimes\Sym^{p-2}= \bigoplus_{a=0}^{p-2} \Ss_{(p+a-2,p-a-2)}.
\end{aligned}
\end{equation*}

In order to count factors isomorphic to $\mathcal L_\C=\Ss_{[(2)]}$ in the restriction of terms of the form $\Ss_{(k+a,k-a)}$ to $\O(n,\C)$, apply \eqref{E:restriction} with partitions $\nu=(k+a,k-a)$ and $\bar{\lambda}=(2)$. There are several cases to be analyzed, as follows.
\begin{enumerate}[\rm (i)]
\item If $k+a$ is even and $a=0$ or $a=k$, there is a \emph{unique} strict $\bar\lambda$-expansion starting from a partition $\delta$ with even parts arriving at $\nu$:
\begin{center}
\begin{minipage}{0.35\textwidth}
\ydiagram{4,4}*[*(gray) 1]{0,2+2} \\[2pt] ($a=0$)
\end{minipage} \hspace{0.5cm}
\begin{minipage}{0.35\textwidth}  \ydiagram{8}*[*(gray) 1]{6+2} \\ ($a=k$) \end{minipage}
\end{center}
\item If $k+a$ is even and $0<a<k$, there are \emph{two} strict $\bar\lambda$-expansions:
\begin{center}
\begin{minipage}{0.35\textwidth}
\ydiagram{6,2}*[*(gray) 1]{0,2} \end{minipage} \hspace{0.5cm}
\begin{minipage}{0.35\textwidth}  \ydiagram{6,2}*[*(gray) 1]{4+2,0}
\end{minipage}
\end{center}
\item If $k+a$ is odd and $0<a<k$, there is a \emph{unique} strict $\bar\lambda$-expansion:
\begin{center}
\ydiagram{5,3}*[*(gray) 1]{4+1,2+1} 
\end{center}
\item If $k+a$ is odd and $a=0$, there are \emph{no} strict $\bar\lambda$-expansions by the Pieri rule.
\end{enumerate}

Analogously, the restriction of $\Ss_{(k+a,k-a-2)}$ to $\O(n,\C)$ contains two factors isomorphic to $\Ss_{[(2)]}$ if $k+a$ is even and $a<k-2$, one factor if $a=k-2$, and one factor if $k+a$ is odd.

Altogether, some elementary counting yields that the number of factors isomorphic to $\Ss_{[(2)]}$ in each term of \eqref{E:S2traceless} is given according to the following table.
\begin{center}
\begin{tabular}{|l|l|}
\hline
Term in \eqref{E:S2traceless} & Number of factors isomorphic to\rule{0pt}{2.5ex}\rule[-1.2ex]{0pt}{0pt} $\Ss_{[(2)]}$ \\
\hline \noalign{\smallskip} \hline 
\multirow{2}{*}{$\Sym^2(\Sym^p)$} & $p$ even: $1+2(\tfrac{p}{2}-1)+1=p$ \rule{0pt}{2.5ex} \\
 & $p$ odd: $2(\frac{p-1}{2})+1=p$\rule[-1.2ex]{0pt}{0pt} \rule{0pt}{2.5ex} \\
\hline
$\Sym^2(\Sym^{p-2})$\rule{0pt}{2.5ex}\rule[-1.2ex]{0pt}{0pt}  & $p-2$ (analogous to the above)\\
\hline
\multirow{2}{*}{$\Sym^p\otimes\Sym^{p-2}$}\rule{0pt}{2.5ex} & $p$ even: $1+2(\frac{p}{2}-1)+(\frac{p}{2}-1)=\frac{3p-4}{2}$\\
 & $p$ odd: $1+2(\frac{p-3}{2})+\frac{p-1}{2}=\frac{3p-5}{2}$\rule[-1.2ex]{0pt}{0pt}\rule{0pt}{2.5ex}  \\
\hline
\multirow{2}{*}{$\Sym^{p-2}\otimes\Sym^{p-2}$}\rule{0pt}{2.5ex} & $p$ even: $1+2(\frac{p}{2}-2)+1+(\frac{p}{2}-1)=\frac{3p-6}{2}$ \\
& $p$ odd: $1+2(\frac{p-3}{2})+\frac{p-3}{2}=\frac{3p-7}{2}$\rule[-1.2ex]{0pt}{0pt} \rule{0pt}{2.5ex} \\
\hline
\end{tabular}
\end{center}
Therefore, combining the quantities in the above table according to \eqref{E:S2traceless}, we have
\begin{equation*}
\begin{aligned}
p-(p-2)-\tfrac{3p-4}{2}+\tfrac{3p-6}{2} &= 1, \quad \text{ if } p \text{ is even,}\\
p-(p-2)-\tfrac{3p-5}{2}+\tfrac{3p-7}{2} &= 1, \quad \text{ if } p \text{ is odd.}
\end{aligned}
\end{equation*}
In other words, there is exactly one factor isomorphic to $\mathcal L_\C=\Ss_{[(2)]}$ in $\Sym^2(\Ss_{[(p)]})$.

We proceed in a similar fashion to count factors isomorphic to $\mathcal W_\C=\Ss_{[(2,2)]}$. Namely, we apply \eqref{E:restriction} with $\nu=(k+a,k-a)$ and $\bar\lambda=(2,2)$. Again, there are different cases to be analyzed.
\begin{enumerate}[\rm (i)]
\item If $k+a$ is even and $0\leq a<k$, there is a \emph{unique} strict $\bar\lambda$-expansion starting from a partition $\delta$ with even parts arriving at $\nu$:
\begin{center}
\begin{minipage}{0.35\textwidth}
 \ydiagram{4,4}*[*(gray) 1]{2+2,0}*[*(gray) 2]{0,2+2} \\[2pt] ($a=0$)
\end{minipage}
\hspace{0.5cm} 
\begin{minipage}{0.35\textwidth}
\ydiagram{6,2}*[*(gray) 1]{4+2}*[*(gray) 2]{0,2} \\[2pt] ($a>0$)
\end{minipage}
\end{center}
\item If $k+a$ is even and $a=k$, or $k+a$ is odd, then there are \emph{no} strict $\bar\lambda$-expansions.
\end{enumerate}

Analogously, the restriction of $\Ss_{(k+a,k-a-2)}$ to $\O(n,\C)$ contains one factor isomorphic to $\Ss_{[(2,2)]}$ if $k+a$ is even and $a<k-2$, and no factors otherwise.

Thus, the number of factors isomorphic to $\Ss_{[(2,2)]}$ in each term of \eqref{E:S2traceless} is given according to the following table.
\begin{center}
\begin{tabular}{|l|l|}
\hline
Term in \eqref{E:S2traceless} & Number of factors\rule[-1.2ex]{0pt}{0pt} \rule{0pt}{2.5ex}  $\Ss_{[(2,2)]}$ \\
\hline \noalign{\smallskip} \hline 
\multirow{2}{*}{$\Sym^2(\Sym^p)$} & $p$ even: $\frac{p}{2}$\rule{0pt}{2.5ex}  \\
 & $p$ odd: $\frac{p-1}{2}$\rule[-1.2ex]{0pt}{0pt} \rule{0pt}{2.5ex} \\
\hline
\multirow{2}{*}{$\Sym^2(\Sym^{p-2})$, $\Sym^p\otimes\Sym^{p-2}$, $\Sym^{p-2}\otimes\Sym^{p-2}$} & $p$ even: $\frac{p-2}{2}$\rule{0pt}{2.5ex}  \\
 & $p$ odd: $\frac{p-3}{2}$\rule{0pt}{2.5ex} \rule[-1.2ex]{0pt}{0pt}  \\
\hline
\end{tabular}
\end{center}
Therefore, combining the quantities in the above table according to \eqref{E:S2traceless}, it follows that there is exactly one factor isomorphic to $\mathcal W_\C=\Ss_{[(2,2)]}$ in $\Sym^2(\Ss_{[(p)]})$.

To tackle the case $\wedge^4\C^n=\Ss_{[(1,1,1,1)]}$, set $\bar{\lambda}=(1,1,1,1)$ in \eqref{E:restriction}. Any strict $\bar{\lambda}$-expansion has at least four parts, and therefore $N_{\delta\bar{\lambda}\nu}=0$ for any partition $\nu$ with less than four parts. Hence, there are no factors isomorphic to $\wedge^4\C^n$ in $\Sym^2(\Ss_{[(p)]})$. 

Finally, since $\Ss_{[(p)]}$ is irreducible, $\Sym^2(\Ss_{[(p)]})$ contains exactly one copy of the trivial representation $\mathcal U_\C\cong\C$, corresponding to multiples of $\operatorname{Id}\in \Sym^2(\Ss_{[(p)]})$.
\end{proof}

\begin{lemma}
\label{lemma:onecopyWedge}
The $\O(n)$-representation $\Sym^2(\wedge^p\R^n)$ has exactly one irreducible factor isomorphic to each of $\mathcal U$, $\mathcal L$, $\mathcal W$, and $\wedge^4\R^n$, for all $2\leq p\leq n-2$, $n\geq4$.
\end{lemma}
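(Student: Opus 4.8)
The plan is to mirror the proof of Lemma~\ref{lemma:onecopySym}, exploiting the fact that the four target irreducible factors $\mathcal U_\C$, $\mathcal L_\C=\Ss_{[(2)]}$, $\mathcal W_\C=\Ss_{[(2,2)]}$, and $\wedge^4\C^n=\Ss_{[(1,1,1,1)]}$ are of real type, so that counting copies over $\R$ is the same as counting copies over $\C$ inside $\Sym^2(\wedge^p\C^n)$. The starting point is the classical decomposition (via Weyl's construction, or directly from the Littlewood-Richardson rule, cf.~\cite[Exercise~6.16]{FultonHarris}) of $\Sym^2(\wedge^p\C^n)$ into a sum of Schur functors $\Ss_\nu$ of $\GL(n,\C)$. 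Concretely, $\Sym^2(\wedge^p\C^n)\cong\bigoplus \Ss_{\nu}$, where $\nu$ ranges over partitions obtained by stacking two columns of length $p$ and whose conjugate $\nu'$ has all even parts (equivalently, the partitions $\nu$ with $\nu_1\le 2$, exactly $p$ boxes in total appearing in pairs per row-length, and at most $2p$ rows); schematically $\nu=(2^{a},1^{2p-2a})$ for $0\le a\le p$. For each such $\nu$ one then applies Littlewood's Restriction Formula \eqref{E:restriction} with $\bar\lambda$ successively equal to $(2)$, $(2,2)$, and $(1,1,1,1)$, and does the bookkeeping.

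First I would record the $\GL$-decomposition of $\Sym^2(\wedge^p\C^n)$ explicitly, fixing notation $\nu^{(a)}=(2^a,1^{2p-2a})$ for $a=0,\dots,p$. Second, for the factor $\mathcal L_\C=\Ss_{[(2)]}$, I would compute $N_{\nu^{(a)}\,(2)}=\sum_\delta N_{\delta\,(2)\,\nu^{(a)}}$ over even-part partitions $\delta$: since $(2)$ is a single row of two boxes, a strict $(2)$-expansion of $\delta$ to $\nu^{(a)}$ adds two boxes not in the same column, and $\delta$ must be obtainable from $\nu^{(a)}$ by removing two such boxes with $\delta$ having all even rows — a quick case analysis on the shape $(2^a,1^{2p-2a})$ shows exactly one valid $\delta$ arises in total as $a$ ranges, giving multiplicity $1$ overall. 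Third, for $\mathcal W_\C=\Ss_{[(2,2)]}$ one argues similarly: a strict $(2,2)$-expansion removes a $2\times 2$ block's worth of boxes arranged so that $\delta$ has even rows, and again the column-bounded shapes $\nu^{(a)}$ admit exactly one such configuration in aggregate. Fourth, for $\wedge^4\C^n=\Ss_{[(1,1,1,1)]}$, note $\bar\lambda=(1,1,1,1)$ has four parts, so $N_{\delta\,(1^4)\,\nu^{(a)}}$ can be nonzero only when $\nu^{(a)}$ has at least four rows, i.e. $2p-2a+(a>0)\ge 4$; unlike the symmetric case this is genuinely attained (e.g. $a=0$, $\nu=(1^{2p})$, $\delta=\emptyset$), and one checks exactly one copy appears across all $a$. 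Finally, the trivial factor $\mathcal U_\C$ appears exactly once, namely as the span of $\id_{\wedge^p\C^n}\in\Sym^2(\wedge^p\C^n)$, since $\wedge^p\C^n$ is irreducible.

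The main obstacle I anticipate is the combinatorial bookkeeping in the third step, for $\mathcal W_\C=\Ss_{[(2,2)]}$: unlike a single-row $\bar\lambda$, a $(2,2)$-expansion interacts with both the two-box-wide part $(2^a)$ and the column $(1^{2p-2a})$ of $\nu^{(a)}$, and one must be careful that the "strictness" condition of the Littlewood-Richardson rule (the column-word of the skew tableau being a lattice word, with the extra strictness on $\bar\lambda$) is correctly applied so that contributions are not over- or under-counted as $a$ varies. A clean way to sidestep some of this is to use the known formula for $\Sym^2(\wedge^p)$ as a $\GL$-module and then invoke Littlewood's formula only for the few small $\bar\lambda$ of interest, reducing everything to removing at most four boxes from a shape with columns of length $\le 2$; this keeps every case analysis finite and transparent. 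It is worth noting that the hypothesis $2\le p\le n-2$ guarantees the shapes $\nu^{(a)}$ have at most $n$ rows and hence are genuine (nonzero) $\O(n,\C)$-representations, and $n\ge 4$ ensures $\wedge^4\C^n\ne 0$, so the stated count is the correct one; the edge cases $p=1,n-1,n$ excluded here are exactly those already handled in the preceding paragraph of the text.
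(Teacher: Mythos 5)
Your overall strategy---complexify, decompose $\Sym^2(\wedge^p\C^n)$ into Schur functors via the Littlewood--Richardson rule, then count copies of $\Ss_{[(2)]}$, $\Ss_{[(2,2)]}$, and $\Ss_{[(1,1,1,1)]}$ using Littlewood's restriction formula \eqref{E:restriction}---is exactly the paper's. But two concrete steps are wrong as written. First, your input decomposition is not that of $\Sym^2(\wedge^p\C^n)$: letting $a$ range over \emph{all} of $0,\dots,p$ in $\nu=(2^a,1^{2p-2a})$ enumerates every two-column shape, i.e.\ the full tensor square $\wedge^p\C^n\otimes\wedge^p\C^n$. The symmetric square consists only of those shapes whose number of parts equal to $1$ is divisible by $4$ (this is \eqref{E:LR-Wk}); already for $p=2$ your list wrongly includes $\Ss_{(2,1,1)}$, which lives in $\wedge^2(\wedge^2\C^n)$. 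Your alternative criterion ``$\nu'$ has all even parts'' is also false in general: for $p=3$ the genuine summand $\Ss_{(2,2,2)}$ of $\Sym^2(\wedge^3\C^n)$ has conjugate $(3,3)$. It so happens that the spurious extra summands contribute no copies of the four target representations, but that requires checking, and your sketch neither states the correct decomposition nor performs that check.

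Second, your witness for the copy of $\wedge^4\C^n=\Ss_{[(1,1,1,1)]}$, namely $\nu=(1^{2p})$ with $\delta=\emptyset$, only works when $p=2$: for $p>2$ the required $\delta$ would be $(1^{2p-4})$, whose parts are odd, so it is excluded from \eqref{E:restriction} and $\Ss_{(1^{2p})}$ contributes nothing. The unique contribution in fact comes from $\nu=(2^{p-2},1^4)$ with $\delta=(2^{p-2})$, as in the paper; likewise the unique contributions of $\Ss_{[(2)]}$ and $\Ss_{[(2,2)]}$ come from $\nu=(2^p)$ with $\delta=(2^{p-1})$ and $\delta=(2^{p-2})$ respectively. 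Your phrase ``exactly one valid configuration in aggregate'' is the conclusion, not an argument---the deferred case analysis (in particular the pigeonhole-type observation that a strict $\bar\lambda$-expansion with $\bar\lambda$ of at most two rows cannot fill a column of three or more added boxes, and that a strict $(1,1,1,1)$-expansion forces exactly four odd parts in $\nu$) is precisely where the content of the lemma lies, so as it stands the proposal has a genuine gap even though it is aimed in the right direction.
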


\begin{proof}
As in the proof of Lemma~\ref{lemma:onecopySym}, it suffices to count $\O(n,\C)$-irreducible factors isomorphic to $\mathcal U_\C$, $\mathcal L_\C$, $\mathcal W_\C$, and $\wedge^4\C^n$ in $\Sym^2(\wedge^p\C^n)=\Sym^2(\Ss_{[(1,1,\ldots,1)]}\C^n)$.
By the Littlewood--Richardson rule \eqref{E:LRrule}, see \cite[Exercise 6.16, p.~81]{FultonHarris}, we have
\begin{equation}\label{E:LR-Wk}
\Sym^2(\wedge^p) =\bigoplus_{\substack{0\leq a\leq p \\ a\text{ even}}}\Ss_{\nu_a},
\end{equation}
where $\nu_a=(2,2,\ldots,2,1,1,\ldots,1)$ is the partition of $2p$ into $(p-a)$ factors $2$ and $2a$ factors $1$.

Applying the Littlewood restriction rule \eqref{E:restriction} with $\nu=\nu_a$ and $\bar\lambda=(2)$, respectively $\bar\lambda=(2,2)$, it follows that $\Ss_{\nu_a}$ contains a unique factor isomorphic to $\Ss_{[(2)]}$, respectively $\Ss_{[(2,2)]}$, if $a=0$; and no such factors if $a>0$.
\begin{center}
\begin{minipage}{0.3\textwidth}
\begin{center}
\ydiagram{2,2,2,2}*[*(gray) 1]{0,0,0,2} \\[2pt] $\bar\lambda=(2)$
\end{center}
\end{minipage}
\hspace{0.5cm} 
\begin{minipage}{0.3\textwidth}
\begin{center}
 \ydiagram{2,2,2,2}*[*(gray) 1]{0,0,2,0}*[*(gray) 2]{0,0,0,2} \\[2pt] $\bar\lambda=(2,2)$
 \end{center}
\end{minipage}
\end{center}

Setting $\bar\lambda=(1,1,1,1)$ in \eqref{E:restriction}, it follows that $\Ss_{\nu_a}$ contains a unique factor isomorphic to $\Ss_{[(1,1,1,1)]}$ if $a=2$ and no such factors otherwise.
\begin{center}
\ydiagram{2,2,1,1,1,1}*[*(gray) 1]{0,0,1,0,0,0}*[*(gray) 2]{0,0,0,1,0,0}*[*(gray) 3]{0,0,0,0,1,0}*[*(gray) 4]{0,0,0,0,0,1}
\end{center}
Indeed, note that $N_{\delta\bar\lambda\nu_a}=0$ unless $a=2$ and $\delta=(2,\ldots,2)$ is the partition of $2p-4$ into $p-2$ equal terms, in which case $N_{\delta\bar\lambda\nu_a}=1$. Here, we have used the fact that a strict $\bar\lambda$-expansion of $\delta$ has precisely $4$ odd parts.

Thus, according to \eqref{E:LR-Wk}, it follows that $\Sym^2(\wedge^p)$ contains exactly one factor isomorphic to each of $\mathcal L_\C$, $\mathcal W_\C$, and $\wedge^4\C^n$. Finally, since $\Ss_{[(1,1,\dots,1)]}$ is irreducible, $\Sym^2(\Ss_{[(1,1,\dots,1)]})$ contains exactly one copy of the trivial representation $\mathcal U_\C\cong\C$, corresponding to multiples of $\operatorname{Id}\in \Sym^2(\Ss_{[(1,1,\dots,1)]})$.
\end{proof}

%

\section{Curvature terms for alternating and symmetric tensors}
\label{sec:proofThmB}

In this section, we give a proof of Theorem~\ref{mainthm:KRWedgeSymKN} by computing the curvature terms $\mathcal K(R,\wedge^p\R^n)$ and $\mathcal K(R,\Sym^p_0\R^n)$ of the Weitzenb\"ock formulae for alternating and symmetric tensors using the Kulkarni--Nomizu algebras studied in the previous section.
As mentioned in the Introduction, the formula for $\mathcal K(R,\wedge^p\R^n)$ under the assumption that $R_{\wedge^4}=0$ was previously obtained by Labbi~\cite[Prop.~4.2]{Labbi15}, however we provide here a new and independent proof for the sake of completeness.

\begin{proof}[Proof of Theorem~\ref{mainthm:KRWedgeSymKN}]
We begin by analyzing the case of the $\O(n)$-representations $\rho$ of the form $\Sym^p_0\R^n$, $p\geq2$. It follows from Proposition~\ref{prop:Krho} and $\O(n)$-equivariance of $(\cdot)\ovee \g\colon\mathcal A_0\to\mathcal A_0$ that the linear maps
\begin{equation}\label{eq:KSymId}
\begin{aligned}
\mathcal K(\cdot,\Sym^p_0\R^n)\colon &\Sym^2(\wedge^2\R^n) \to \Sym^2(\Sym^p_0\R^n)\\
\mathcal K(\cdot,\Sym^2_0\R^n)\ovee \g^{p-2}\colon &\Sym^2(\wedge^2\R^n) \to \Sym^2(\Sym^p_0\R^n)
\end{aligned}
\end{equation}
are $\O(n)$-equivariant. Thus, their restrictions to the $\O(n)$-irreducible factors in the decomposition $\Sym^2(\wedge^2\R^n)=\mathcal U\oplus\mathcal L\oplus\mathcal W\oplus\wedge^4\R^n$ are themselves $\O(n)$-equivariant, and their images are contained in the corresponding $\O(n)$-irreducible factors of $\Sym^2(\Sym^p_0\R^n)$. According to Lemma~\ref{lemma:onecopySym}, there are no $\O(n)$-irreducible factors isomorphic to $\wedge^4\R^n$ in $\Sym^2(\Sym^p_0\R^n)$, and hence the restrictions of the maps \eqref{eq:KSymId} to $\wedge^4\R^n\subset\Sym^2(\wedge^2\R^n)$ vanish identically by Schur's Lemma. Moreover, by Lemma~\ref{lemma:onecopySym}, there is a unique $\O(n)$-irreducible factor isomorphic to each of $\mathcal U$, $\mathcal L$, and $\mathcal W$ in $\Sym^2(\Sym^p_0\R^n)$. Provided that the restrictions of $\mathcal K(\cdot,\Sym^2_0\R^n)\ovee \g^{p-2}$ to such irreducibles do not vanish, it follows from Schur's Lemma that, since they are of real type, there exist real constants $A_{p,n}$, $B_{p,n}$, and $C_{p,n}$, such that
\begin{equation*}
\begin{aligned}
\mathcal K(R,\Sym^p_0\R^n) &=A_{p,n} \, \mathcal K(R_\mathcal U,\Sym^2_0\R^n)\ovee \g^{p-2} + B_{p,n} \, \mathcal K(R_\mathcal L,\Sym^2_0\R^n)\ovee \g^{p-2} \\
&\quad  + C_{p,n} \, \mathcal K(R_\mathcal W,\Sym^2_0\R^n)\ovee \g^{p-2}
\end{aligned}
\end{equation*}
where $R=R_{\mathcal U}+R_{\mathcal L}+R_{\mathcal W}+R_{\wedge^4}$ are the components of $R$. By evaluating the maps in \eqref{eq:KSymId} at convenient choices of $R\in\Sym^2(\wedge^2\R^n)$ and $\varphi_p\in\Sym^p_0\R^n$, we simultaneously show that $\mathcal K(\cdot,\Sym^2_0\R^n)\ovee \g^{p-2}$ restricted to each of $\mathcal U$, $\mathcal L$, and $\mathcal W$ is nonzero, and compute $A_{p,n}$, $B_{p,n}$, and $C_{p,n}$, proving the desired formula.

In what follows, using the conventions established in the Introduction, we fix
\begin{equation*}
\varphi_p(x) := \re \left(x_1+ \ii x_2\right)^p \in\Sym^p_0 \R^n.
\end{equation*}
In order to simplify computations, we use complex coordinates
\begin{equation*}
z=x_1+\ii x_2, \quad \overline z = x_1-\ii x_2,
\end{equation*}
with respect to which $\varphi_p(x)=\re z^p=\frac{z^p+\overline z^p}{2}$.
Recall that, according to our conventions, the inner product on $\Sym^p_0\R^n$ is computed as $\langle \varphi,\psi\rangle=\widehat\varphi(\psi)$, where $\widehat\varphi$ is the \emph{dual} differential operator to $\varphi$. In complex coordinates, note that the duals to $z$ and $\overline z$ are respectively $\widehat z= 2\tfrac{\partial}{\partial \overline z}$ and $\widehat{\overline z}= 2\tfrac{\partial}{\partial  z}$. For example, we may compute
\begin{equation}\label{eq:varphipnorm}
\|\varphi_p\|^2=\widehat{\varphi_p}(\varphi_p)=\tfrac{\widehat{z^p+\overline{z}^p}}{2}\left(\tfrac{z^p+\overline{z}^p}{2}\right)=2^{p-1}\, p!.
\end{equation}

First, we consider the irreducible factor $\mathcal U\cong\R$. Setting
\begin{equation}\label{eq:RU}
R_\mathcal U=\tfrac12 \g \owedge \g=\sum_{1\leq i <j\leq n} E_{ij}\otimes E_{ij},
\end{equation}
it follows from \eqref{eq:krho} that
\begin{equation}\label{eq:KRuSymp}
\langle \mathcal K(R_\mathcal U,\Sym^p_0\R^n)\varphi,\varphi\rangle = -\sum_{1\leq i <j\leq n} \langle  \dd\rho(E_{ij})^2\varphi,\varphi\rangle=\sum_{1\leq i <j\leq n} \|\dd\rho(E_{ij})\varphi\|^2.
\end{equation}
Note that $\dd\rho(E_{ij})\varphi_p=0$ for $3\leq i<j\leq n$, while
\begin{equation*}
\begin{aligned}
&\dd\rho(E_{12})\varphi_p = \left(x_1\tfrac{\partial}{\partial x_2}-x_2\tfrac{\partial}{\partial x_1}\right)\tfrac{z^p+\overline z^p}{2}= \ii \left(z\tfrac{\partial}{\partial z}-\overline z \tfrac{\partial}{\partial \overline z} \right)\tfrac{z^p+\overline z^p}{2}= -p\im z^p,\\
&\dd\rho(E_{1j})\varphi_p  = \left(x_1\tfrac{\partial}{\partial x_j}-x_j\tfrac{\partial}{\partial x_1}\right)\tfrac{z^p+\overline z^p}{2} = -x_j \left(\tfrac{\partial}{\partial z}+\tfrac{\partial}{\partial \overline z} \right)\tfrac{z^p+\overline z^p}{2}=- p\, x_j \re z^{p-1},\\
&\dd\rho(E_{2j})\varphi_p  = \left(x_2\tfrac{\partial}{\partial x_j}-x_j\tfrac{\partial}{\partial x_2}\right)\tfrac{z^p+\overline z^p}{2} = -\ii x_j \left(\tfrac{\partial}{\partial z}-\tfrac{\partial}{\partial \overline z} \right)\tfrac{z^p+\overline z^p}{2}= p\, x_j \im z^{p-1},
\end{aligned}
\end{equation*}
for $3\leq j\leq n$. Hence, the square norms of the above can be computed as
\begin{equation}\label{eq:normsdrhophip}
\begin{aligned}
\|\dd\rho(E_{12})\varphi_p\|^2 &=p^2 \tfrac{\widehat{({z}^p -{\overline z}^p})}{2\ii}
\left(\tfrac{{z}^p -{\overline z}^p}{2\ii}\right)\\
&= -\ii \, 2^{p-1} \, p^2 \left(\left(\tfrac{\partial}{\partial \overline z} \right)^p - \left(\tfrac{\partial}{\partial z} \right)^p \right)\left(\tfrac{{z}^p -{\overline z}^p}{2\ii}\right) \\
&=2^{p-1}\, p^2 \, p!,\\
\|\dd\rho(E_{1j})\varphi_p\|^2=\|\dd\rho(E_{2j})\varphi_p\|^2 &=2^{p-2}\, p \, p!, \quad 3\leq j\leq n,\\
\|\dd\rho(E_{ij})\varphi_p\|^2 &=0, \quad 3\leq i< j \leq n.
\end{aligned}
\end{equation}
Altogether, by \eqref{eq:KRuSymp}, we have that
\begin{equation}\label{eq:KRuSympSum}
\begin{aligned}
\langle \mathcal K(R_\mathcal U,\Sym^p_0\R^n)\varphi_p,\varphi_p\rangle &= 2^{p-1}\, p^2 \, p! + 2(n-2)(2^{p-2}\, p \, p!)\\
&=(n+p-2)\, 2^{p-1} \,p^2 \, (p-1)!.
\end{aligned}
\end{equation}
In particular, $\langle\mathcal K(R_\mathcal U,\Sym^2_0\R^n)\varphi_p,\varphi_p\rangle=8n$. Combining this with \eqref{eq:varphipnorm}, Example~\ref{ex:gpowers}, and $\O(n)$-equivariance, we have that $\mathcal K(R_\mathcal U,\Sym^2_0\R^n)=n\, \g\ovee\g$, and hence 
\begin{equation}\label{eq:KRuOutrolado}
\langle (\mathcal K(R_\mathcal U,\Sym^2_0\R^n)\ovee \g^{p-2}) \varphi_p,\varphi_p\rangle= n \, 2^{p-1} \, (p!)^2.
\end{equation}
Therefore, by \eqref{eq:KRuSympSum} and \eqref{eq:KRuOutrolado}, we conclude that $A_{p,n}=\tfrac{n+p-2}{n(p-1)!}$.

Second, consider the irreducible factor $\mathcal L\cong\Sym^2_0\R^n$. Setting
\begin{equation}\label{eq:RL}
R_\mathcal L=\operatorname{diag}(1,0,\dots,0,-1) \owedge \g=\sum_{2\leq j\leq n-1} E_{1j}\otimes E_{1j} -\sum_{2\leq i\leq n-1} E_{in}\otimes E_{in},
\end{equation}
it follows from \eqref{eq:krho} and \eqref{eq:normsdrhophip} that
\begin{align}\label{eq:KRLSymP}
\langle \mathcal K(R_\mathcal L,\Sym^p_0\R^n) \varphi_p,\varphi_p\rangle &=\sum_{2\leq j\leq n-1} \|\dd\rho(E_{1j})\varphi_p\|^2-\sum_{2\leq i\leq n-1} \|\dd\rho(E_{in})\varphi_p\|^2\nonumber \\
&=2^{p-1}\,p^2\,p!+(n-3)(2^{p-2}\, p\, p!)-2^{p-2}\,p\,p!\\
&=(n+2p-4)\, 2^{p-2}\, p^2\, (p-1)!.\nonumber
\end{align}
On the other hand, to compute $\langle (\mathcal K(R_\mathcal L,\Sym^2_0\R^n)\ovee \g^{p-2}) \varphi_p,\varphi_p\rangle$, we need to recognize $\dd\rho(E_{ij})^2\colon \Sym^2\R^n\to\Sym^2\R^n$ as an element of $\Sym^2(\Sym^2\R^n)$. Applying \eqref{eq:drhoEij} twice to each element of the orthonormal basis of $\Sym^2\R^n$, we obtain
\begin{equation}\label{eq:drhoEijSym2}
\dd\rho(E_{ij})^2=x_i^2\otimes x_j^2+x_j^2\otimes x_i^2 - \sum_{k=1}^n (x_i x_k\otimes x_i x_k+x_j x_k\otimes x_j x_k)-2x_ix_j\otimes x_ix_j.
\end{equation}
Combining \eqref{eq:RL}, \eqref{eq:drhoEijSym2}, Example~\ref{ex:gpowers}, and the fact that $\varphi_p(x)$ depends only on $x_1$ and $x_2$, we have that
\begin{equation*}
\begin{aligned}
\langle (\mathcal K(R_\mathcal L,\Sym^2_0\R^n)\ovee \g^{p-2}) \varphi_p,\varphi_p\rangle &=\langle (F\ovee G)\varphi_p,\varphi_p\rangle,
\end{aligned}
\end{equation*}
where
\begin{equation}\label{eq:F1F2def}
\begin{aligned}
F &= -x_1^2\otimes x_2^2-x_2^2\otimes x_1^2+n(x_1x_2\otimes x_1x_2)+(n-2)x_1^2\otimes x_1^2,\\
G &=\sum_{k=0}^{p-2} \binom{p-2}{k} \, x_1^{k} x_2^{p-k-2}\otimes x_1^{k} x_2^{p-k-2}.
\end{aligned}
\end{equation}
Expanding $F\ovee G$ according to the definition of $\ovee$ yields
\begin{equation}\label{eq:F1oveeF2}
F\ovee G=\sum_{k=0}^{p-2} \binom{p-2}{k}\, H_k,
\end{equation}
where
\begin{equation*}
\begin{aligned}
H_k &= -x_1^{k+2}x_2^{p-k-2}\otimes x_1^k x_2^{p-k}-x_1^k x_2^{p-k}\otimes x_1^{k+2}x_2^{p-k-2}\\
&\quad +n \, x_1^{k+1}x_2^{p-k-1}\otimes x_1^{k+1}x_2^{p-k-1}+(n-2)\, x_1^{k+2}x_2^{p-k-2}\otimes x_1^{k+2}x_2^{p-k-2}.
\end{aligned}
\end{equation*}
Using complex coordinates, the first term of $H_k$ acts on $\varphi_p$ as
\begin{equation}\label{eq:CoeffHk0}
\begin{aligned}
&\left\langle \big(-x_1^{k+2}x_2^{p-k-2}\otimes x_1^k x_2^{p-k}\big)\varphi_p,\varphi_p\right\rangle= \\
&\quad=\left\langle -x_1^{k+2}x_2^{p-k-2}\left(\tfrac{\partial}{\partial z}+\tfrac{\partial}{\partial \overline z}\right)^k \left(\ii \tfrac{\partial}{\partial z}-\ii\tfrac{\partial}{\partial \overline z}\right)^{p-k}\left(\tfrac{z^p+\overline{z}^p}{2}\right),\tfrac{z^p+\overline{z}^p}{2}\right\rangle\\
&\quad= -p! \ii^{p-k} \left(\tfrac{1+(-1)^{p-k}}{2}\right)
\left(\tfrac{\partial}{\partial z}+\tfrac{\partial}{\partial \overline z}\right)^{k+2} \left(\ii \tfrac{\partial}{\partial z}-\ii\tfrac{\partial}{\partial \overline z}\right)^{p-k-2}\left(\tfrac{z^p+\overline{z}^p}{2}\right)\\
&\quad=(-1)^{p-k}(p!)^2\left(\tfrac{1+(-1)^{p-k}}{2}\right).
\end{aligned}
\end{equation}
Analogously, the second, third, and fourth terms of $H_k$ act on $\varphi_p$ respectively as
\begin{align}
\left\langle \big(-x_1^k x_2^{p-k}\otimes x_1^{k+2}x_2^{p-k-2}\big)\varphi_p,\varphi_p\right\rangle &= (-1)^{p-k}(p!)^2\left(\tfrac{1+(-1)^{p-k}}{2}\right),\label{eq:CoeffHk}  \\
\left\langle \big(n \, x_1^{k+1}x_2^{p-k-1}\otimes x_1^{k+1}x_2^{p-k-1}\big)\varphi_p,\varphi_p\right\rangle &= n(-1)^{p-k-1}(p!)^2\left(\tfrac{1+(-1)^{p-k-1}}{2}\right), \nonumber \\
\left\langle \big((n-2)\, x_1^{k+2}x_2^{p-k-2}\otimes x_1^{k+2}x_2^{p-k-2}\big)\varphi_p,\varphi_p\right\rangle &=(n-2)(-1)^{p-k}(p!)^2\left(\tfrac{1+(-1)^{p-k}}{2}\right).\nonumber
\end{align}
Altogether, summing over $k$ according to \eqref{eq:F1oveeF2}, we obtain
\begin{equation}\label{eq:KRLOutrolado}
\langle (\mathcal K(R_\mathcal L,\Sym^2_0\R^n)\ovee \g^{p-2}) \varphi_p,\varphi_p\rangle=n \,2^{p-2}(p!)^2.
\end{equation}
Therefore, by \eqref{eq:KRLSymP} and \eqref{eq:KRLOutrolado}, we conclude that $B_{p,n}=\frac{n+2p-4}{n(p-1)!}$.

Third, consider the irreducible factor $\mathcal W$. It can be checked that
\begin{equation}\label{eq:RW}
R_\mathcal W=(E_{12}+E_{34})\otimes (E_{12}+E_{34}) - (E_{13}-E_{24})\otimes (E_{13}-E_{24})
\end{equation}
is orthogonal to $\mathcal U\oplus\mathcal L\oplus\wedge^4\R^n$ and hence belongs to $\mathcal W$. It follows from \eqref{eq:krho} and the fact that $\varphi_p(x)$ depends only on $x_1$ and $x_2$ that
\begin{equation*}
\begin{aligned}
\langle \mathcal K(R_\mathcal W,\Sym^p_0\R^n) \varphi_p,\varphi_p\rangle &=\|\dd\rho(E_{12})\varphi_p\|^2-\|\dd\rho(E_{13})\varphi_p\|^2-\|\dd\rho(E_{24})\varphi_p\|^2\\
&\quad +2\langle \dd\rho(E_{13})\varphi_p,\dd\rho(E_{24})\varphi_p\rangle.
\end{aligned}
\end{equation*}
The last term above vanishes because $\dd\rho(E_{13})\varphi_p$ is divisible by $x_3$, while $\dd\rho(E_{24})\varphi_p$ does not depend on $x_3$. From \eqref{eq:normsdrhophip}, we have
\begin{equation}\label{eq:KRWSymP}
\begin{aligned}
\langle \mathcal K(R_\mathcal W,\Sym^p_0\R^n) \varphi_p,\varphi_p\rangle&= 2^{p-1}\,p^2\,p!-2(2^{p-2}\,p\,p!) \\
&=(2p-2)\, 2^{p-2}\, p^2\, (p-1)!.
\end{aligned}
\end{equation}
The computation of $\langle (\mathcal K(R_\mathcal W,\Sym^2_0\R^n)\ovee \g^{p-2}) \varphi_p,\varphi_p\rangle$ is analogous to the above case pertaining to $\mathcal L$. Namely,
\begin{equation*}
\langle (\mathcal K(R_\mathcal W,\Sym^2_0\R^n)\ovee \g^{p-2}) \varphi_p,\varphi_p\rangle=\langle(F'\ovee G)\varphi_p,\varphi_p\rangle,
\end{equation*}
where $G$ is defined in \eqref{eq:F1F2def}, and $F'=x_1^2\otimes x_2^2 + x_2^2\otimes x_1^2 -2 x_1x_2\otimes x_1x_2$.
Expanding $F'\ovee G$ according to the definition of $\ovee$ yields
\begin{equation*}
F'\ovee G=\sum_{k=0}^{p-2} \binom{p-2}{k}\, H'_k,
\end{equation*}
where
\begin{equation*}
\begin{aligned}
H'_k &= -x_1^{k+2}x_2^{p-k-2}\otimes x_1^k x_2^{p-k}-x_1^k x_2^{p-k}\otimes x_1^{k+2}x_2^{p-k-2}\\
&\quad +2 \, x_1^{k+1}x_2^{p-k-1}\otimes x_1^{k+1}x_2^{p-k-1}.
\end{aligned}
\end{equation*}
Thus, from \eqref{eq:CoeffHk0} and \eqref{eq:CoeffHk}, summing over $k$, we obtain
\begin{equation}\label{eq:KRWOutrolado}
\langle (\mathcal K(R_\mathcal W,\Sym^2_0\R^n)\ovee \g^{p-2}) \varphi_p,\varphi_p\rangle=2^{p-1}(p!)^2.
\end{equation}
Therefore, by \eqref{eq:KRWSymP} and \eqref{eq:KRLOutrolado}, we conclude that $C_{p,n}=\frac{1}{(p-2)!}$.

We now turn to the case of the $\O(n)$-representations $\rho$ of the form $\wedge^p\R^n$, $2\leq p\leq n-2$. It follows from Proposition~\ref{prop:Krho} and $\O(n)$-equivariance of $(\cdot)\owedge\g\colon\mathcal C\to\mathcal C$ that the linear maps
\begin{equation}\label{eq:KWedgeId}
\begin{aligned}
\mathcal K(\cdot,\wedge^p\R^n)\colon &\Sym^2(\wedge^2\R^n) \to \Sym^2(\wedge^p\R^n)\\
(\cdot)\owedge \g^{p-2}\colon &\Sym^2(\wedge^2\R^n) \to \Sym^2(\wedge^p\R^n)
\end{aligned}
\end{equation}
are $\O(n)$-equivariant. Using Lemma~\ref{lemma:onecopyWedge} and reasoning as in the previous case, it follows that there exist real constants $A'_{p,n}$, $B'_{p,n}$, $C'_{p,n}$, and $D'_{p,n}$ such that
\begin{equation*}
\begin{aligned}
\mathcal K(R,\wedge^p\R^n) &=A'_{p,n} \, R_\mathcal U\owedge \g^{p-2} + B'_{p,n} \, R_\mathcal L\owedge \g^{p-2} \\
&\quad  + C'_{p,n} \, R_\mathcal W\owedge \g^{p-2} +  D'_{p,n} \, R_{\wedge^4}\owedge \g^{p-2}
\end{aligned}
\end{equation*}
where $R=R_{\mathcal U}+R_{\mathcal L}+R_{\mathcal W}+R_{\wedge^4}$ are the components of $R$. By evaluating the maps in \eqref{eq:KWedgeId} at convenient choices of $R\in\Sym^2(\wedge^2\R^n)$ and $\beta_p,\gamma_p\in\wedge^p \R^n$, we simultaneously show that $(\cdot)\owedge \g^{p-2}$ restricted to each of $\mathcal U$, $\mathcal L$, $\mathcal W$, and $\wedge^4 \R^n$ is nonzero, and compute $A'_{p,n}$, $B'_{p,n}$, $C'_{p,n}$, and $D'_{p,n}$ proving the desired formula.
%
%
%
Let
\begin{equation*}
\beta_p=e_1\wedge\dots\wedge e_p\in\wedge^p \R^n, \qquad \gamma_p=(e_1\wedge e_2 + e_3\wedge e_4)\wedge \delta_p \in\wedge^p\R^n,
\end{equation*}
where $\delta_p=e_5\wedge\dots\wedge e_{p+2}\in\wedge^{p-2}\R^n$.

Note that
\begin{equation}\label{eq:SqNormsDRhoWedgep}
\begin{aligned}
\|\dd\rho(E_{ij})\beta_p\|^2&=1, \quad 1\leq i\leq p<j\leq n,\\
\|\dd\rho(E_{ij})\beta_p\|^2&=0, \quad \text{otherwise.}
\end{aligned}
\end{equation}

Using the curvature operator $R_\mathcal U$ in \eqref{eq:RU} and \eqref{eq:SqNormsDRhoWedgep}, we have
\begin{equation}
\langle\mathcal K(R_\mathcal U,\wedge^p\R^n)\beta_p,\beta_p\rangle = \sum_{1\leq i<j\leq n} \|\dd\rho(E_{ij})\beta_p\|^2=p(n-p).
\end{equation}

Using the definition of $\owedge$ and Example~\ref{ex:gpowers}, we have that
\begin{equation}
\langle (R_\mathcal U\owedge \g^{p-2})\beta_p,\beta_p\rangle=\tfrac12\langle (\g^p)\beta_p,\beta_p\rangle=\tfrac12 p!.
\end{equation}
Thus, we conclude that $A'_{p,n}=\tfrac{2(n-p)}{(p-1)!}$.

Using the curvature operator $R_\mathcal L$ in \eqref{eq:RL}, by \eqref{eq:krho} and \eqref{eq:SqNormsDRhoWedgep}, we have
\begin{equation*}
\begin{aligned}
\langle\mathcal K(R_\mathcal L,\wedge^p\R^n)\beta_p,\beta_p\rangle &= \sum_{2\leq j\leq n-1} \|\dd\rho(E_{1j})\beta_p\|^2-\sum_{2\leq i \leq n-1} \|\dd\rho(E_{in})\beta_p\|^2\\
&=(n-p-1)-(p-1)\\
&=n-2p.
\end{aligned}
\end{equation*}
A straightforward computation using the definition of $\owedge$ and Example~\ref{ex:gpowers} gives
\begin{equation}
\langle (R_\mathcal L\owedge \g^{p-2})\beta_p,\beta_p\rangle=(p-1)!
\end{equation}
Thus, we conclude that $B'_{p,n}=\tfrac{n-2p}{(p-1)!}$.

Using the curvature operator $R_\mathcal W$ in \eqref{eq:RW}, by \eqref{eq:krho} and \eqref{eq:LeibnizRule}, we have that
\begin{equation}
\begin{aligned}
\langle\mathcal K(R_{\mathcal W},\wedge^p\R^n)\gamma_p,\gamma_p\rangle &=-\|\dd\rho(E_{13})\gamma_p\|^2-\|\dd\rho(E_{24})\gamma_p\|^2\\
&\quad +2\langle \dd\rho(E_{13})\gamma_p, \dd\rho(E_{24})\gamma_p\rangle\\
&=-8.
\end{aligned}
\end{equation}
On the other hand, using the definition of $\owedge$ and Example~\ref{ex:gpowers}, we have that
\begin{equation*}
\langle (R_{\mathcal W}\owedge\g^{p-2} )\gamma_p,\gamma_p\rangle=4(p-2)!.
\end{equation*}
Thus, we conclude that $C'_{p,n}=\frac{-2}{(p-2)!}$

Finally, consider the irreducible factor $\wedge^4\R^n$. It can be checked that
\begin{equation}
R_{\wedge^4}=E_{12}\otimes E_{34}+E_{34}\otimes E_{12} -E_{13}\otimes E_{24}-E_{24}\otimes E_{13}+E_{14}\otimes E_{23}+E_{23}\otimes E_{14}
\end{equation}
is orthogonal to $\mathcal U\oplus\mathcal L\oplus\mathcal W$ and hence belongs to $\wedge^4\R^n$. Applying \eqref{eq:krho} and \eqref{eq:LeibnizRule},
\begin{equation*}
\begin{aligned}
\langle\mathcal K(R_{\wedge^4},\wedge^p\R^n)\gamma_p,\gamma_p\rangle &= -2\langle\dd\rho(E_{13})\gamma_p,\dd\rho(E_{24})\gamma_p\rangle + 2\langle\dd\rho(E_{14})\gamma_p,\dd\rho(E_{23})\gamma_p\rangle\\
&=-2\langle (-e_3\wedge e_2 + e_1\wedge e_4)\wedge \delta_p,(-e_1\wedge e_4 + e_3\wedge e_2)\wedge \delta_p\rangle\\
&\quad +2\langle (-e_4\wedge e_2 + e_3\wedge e_1)\wedge \delta_p,(-e_1\wedge e_3 + e_2\wedge e_4)\wedge \delta_p\rangle\\
&=8.
\end{aligned}
\end{equation*}
On the other hand, from the definition of $\owedge$ and Example~\ref{ex:gpowers}, we have
\begin{equation*}
\langle (R_{\wedge^4}\owedge \g^{p-2})\gamma_p,\gamma_p\rangle = 2(p-2)!\,\langle e_1\wedge e_2\wedge \delta_p,\gamma_p\rangle \langle e_3\wedge e_4\wedge\delta_p,\gamma_p\rangle= 2(p-2)!.
\end{equation*}
Therefore, we conclude that $D'_{p,n}=\frac{4}{(p-2)!}$.
\end{proof}

\section{Bochner technique in dimension four}
\label{sec:BochnerDim4}

In this section, we combine the Bochner technique and the \emph{Finsler--Thorpe trick} to prove Theorems~\ref{mainthm:dim4} and \ref{mainthm:dim4rigidity} regarding closed $4$-manifolds with $\sec>0$ and $\sec\geq0$. The first of these tools is explained in Subsection~\ref{subsec:Bochner}, so we proceed to briefly discussing the second, see~\cite{BKM,strongpos,ThorpeJDG,Thorpe72,singerthorpe,Zoltek79} for details.

Recall that the (oriented) Grassmannian $\Gr(\R^n)$ of $2$-planes in $\R^n$ is the quadratic subvariety of the unit sphere in $\wedge^2\R^n$ given by the Pl\"ucker relations $\alpha\wedge\alpha=0$, which characterize decomposable elements $\alpha\in\wedge^2\R^n$.
In this context, the sectional curvature function $\sec_R\colon\Gr(\R^n)\to\R$  of a modified algebraic curvature operator $R\in\Sym^2(\wedge^2\R^n)$ is simply the restriction of the quadratic form associated to $R$:
\begin{equation*}
\sec_R(\sigma)=\langle R(\sigma),\sigma\rangle.
\end{equation*}
It is easy to see that the above is independent of the component of $R$ in the subspace $\wedge^4\R^n\subset\Sym^2(\wedge^2\R^n)$. In particular, if there exists $\omega\in\wedge^4\R^n$ such that the operator $(R+\omega)\in\Sym^2(\wedge^2\R^n)$ is positive-definite, then $\sec_R>0$. Remarkably, the converse statement is true in dimensions $\leq4$, as observed by Thorpe~\cite{ThorpeJDG,Thorpe72},
rediscovering a result that was known to Finsler (see \cite{BKM,strongpos} for details).

\begin{proposition}\label{prop:thorpe}
An oriented $4$-manifold $(M,\g)$ has $\sec>0$, respectively $\sec\geq0$, if and only if there exists a continuous function $f\colon M\to\R$ such that the operator $(R+f\,*)\in\Sym^2(\wedge^2 TM)$ is positive-definite, respectively positive-semidefinite.
\end{proposition}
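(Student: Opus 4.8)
The plan is to work pointwise, since the statement is about the existence of a continuous function $f$ and positive-(semi)definiteness is a pointwise condition on the fiber. At each point $p\in M$, we have the algebraic curvature operator $R=R_p\in\Sym^2(\wedge^2 T_pM)$, and on a $4$-dimensional space the bundle $\wedge^2 T_pM$ splits under the Hodge star $*$ into the $\pm1$-eigenspaces $\wedge^2_\pm$, each of dimension $3$. Writing $R$ in block form with respect to this decomposition as $R=\begin{pmatrix} A & B \\ B^\top & C\end{pmatrix}$, where $A\in\Sym^2(\wedge^2_+)$, $C\in\Sym^2(\wedge^2_-)$, and $B\colon\wedge^2_-\to\wedge^2_+$, the element $*$ itself is $\operatorname{diag}(\id_{\wedge^2_+},-\id_{\wedge^2_-})$. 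Thus adding $f\,*$ replaces $A$ by $A+f\,\id$ and $C$ by $C-f\,\id$, leaving $B$ untouched; this is exactly the statement in the excerpt that $\sec_R$ is insensitive to the $\wedge^4\R^4$-component.

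The first key step is the elementary linear-algebra fact that $R+f\,*$ is positive-(semi)definite for some $f$ if and only if $\sec_R>0$ (resp.\ $\geq0$) \emph{at that point}. One direction is immediate: if $R+f\,*$ is positive-definite then for any decomposable (hence unit-norm after scaling) $\sigma\in\Gr(\R^4)$ we have $\sec_R(\sigma)=\langle R\sigma,\sigma\rangle=\langle (R+f*)\sigma,\sigma\rangle>0$, using $\langle *\sigma,\sigma\rangle=0$ for decomposable $\sigma$ (a $2$-plane is orthogonal to its Hodge dual). The nontrivial direction is Thorpe's observation: if $\sec_R\geq0$ then one can find $f$ making $R+f\,*$ positive-semidefinite. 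The cleanest route, following P\"uttmann's argument referenced as \cite[Prop.~6.83]{mybook}, is to choose $f$ to be the \emph{largest} eigenvalue of the skew-symmetric-in-a-suitable-sense part, or more precisely to optimize: set $f(p)$ to be the value that maximizes the smallest eigenvalue of $R_p+f\,*$; one then shows that if this maximin is negative, there is a decomposable two-plane on which $\sec_R<0$, contradicting the hypothesis. Concretely, at the optimal $f$ the bottom eigenspace of $R+f*$ meets both $\wedge^2_+$ and $\wedge^2_-$ (otherwise one could increase $f$ a little in the appropriate direction), and combining a unit eigenvector from each side with equal weights produces a decomposable element — a $2$-plane — realizing a sectional curvature equal to that bottom eigenvalue.

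The second step is to upgrade the pointwise function $p\mapsto f(p)$ to a \emph{continuous} function on $M$. Since the optimal $f(p)$ is characterized variationally as $f(p)=\operatorname{argmax}_{t\in\R}\ \lambda_{\min}(R_p+t\,*)$, and $R_p$ depends continuously on $p$, one argues that this argmax depends continuously on $p$: the function $(p,t)\mapsto \lambda_{\min}(R_p+t\,*)$ is continuous and concave in $t$ (being an infimum of linear functions of $t$), and it is coercive in $t$ (it tends to $-\infty$ as $|t|\to\infty$, since for large $|t|$ one of the two blocks $A+t\,\id$ or $C-t\,\id$ becomes very negative), so the maximizer is unique when strict concavity holds and in any case the maximizer set is an interval varying upper-semicontinuously; a standard selection then gives a continuous $f$. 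For the $\sec>0$ case, one additionally notes that at the maximizing $t$ the value $\lambda_{\min}(R_p+t\,*)$ is strictly positive, and by continuity and compactness of $M$ it stays uniformly positive, so $R+f\,*$ is positive-definite.

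I expect the main obstacle to be the continuity/selection step rather than the pointwise algebra: verifying that $f$ can be chosen continuously (and, in the $\sec>0$ case, that positive-definiteness is uniform) requires care with the concave optimization and with the possibility of a plateau in $t$. The pointwise equivalence, while the conceptual heart of Thorpe's trick, is by now standard and can be cited from \cite{ThorpeJDG,Thorpe72} or \cite[Prop.~6.83]{mybook}; I would present it briefly for completeness and spend the bulk of the argument on the passage from pointwise to global, using compactness of $M$ and the concavity of $t\mapsto\lambda_{\min}(R_p+t\,*)$ to extract a continuous selection.
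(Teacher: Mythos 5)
Your pointwise step coincides with what the paper actually does: Proposition~\ref{prop:thorpe} is justified there purely by citation to Thorpe \cite{ThorpeJDG,Thorpe72} and to P\"uttmann's argument \cite[Prop.~6.83]{mybook}, and your sketch of the latter is essentially right, with one small imprecision: at the optimal $t$ the bottom eigenspace of $R_p+t\,*$ need not intersect $\wedge^2_+$ or $\wedge^2_-$ at all. What the first-order condition for the concave maximum gives is a bottom eigenvector $u$ with $\langle *u,u\rangle=0$, i.e.\ with self-dual and anti-self-dual parts of equal norm; such a $u$ is decomposable, and that is the $2$-plane realizing $\sec_R=\lambda_{\min}$.

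The genuine gap is in your globalization, which is also where you diverge from the paper. The paper does not select the argmax of $t\mapsto\lambda_{\min}(R_p+t\,*)$; it invokes a ``routine barycenter argument'' \cite[Rem.~2.3]{strongpos}: the set $C_p=\{t\in\R: R_p+t\,*\ \text{is positive-definite}\}$ is an open convex interval, nonempty by the pointwise step; any value that works at $p$ works on a neighborhood of $p$ by continuity of $R$; one glues finitely many such locally constant choices with a partition of unity, and convexity of each $C_q$ guarantees the averaged value $f(q)$ still lies in $C_q$. Your route instead needs a continuous selection from the argmax correspondence, and the assertion ``the maximizer set is an interval varying upper-semicontinuously; a standard selection then gives a continuous $f$'' is not a proof: upper semicontinuity of a compact-interval-valued correspondence does \emph{not} imply the existence of a continuous selection (selection theorems such as Michael's require lower semicontinuity), and plateaus of $t\mapsto\lambda_{\min}(R_p+t\,*)$ are exactly the situation in which a jump of the argmax interval could obstruct any continuous choice. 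You would have to prove lower semicontinuity of your specific correspondence, or, much more simply, replace the variational selection by the convexity-plus-partition-of-unity argument above. Finally, note that in the $\sec\geq0$ case the admissible set is closed rather than open, so even the barycenter argument is not immediate there; this closed/semidefinite version is treated in \cite{strongnonneg}, and your proposal is silent on it.
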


In the above statement, we are using the fact that, in dimension $4$, the subspace $\R\cong\wedge^4\R^4\subset \Sym^2(\wedge^2\R^4)$ is spanned by the \emph{Hodge star} $*\colon \wedge^2 \R^4\to\wedge^2\R^4$, the unique self-adjoint operator such that for all $\alpha\in\wedge^2\R^4$,
\begin{equation}\label{eq:hodgestar}
\alpha\wedge *\,\alpha = \|\alpha\|^2\vol,
\end{equation}
where $\vol=e_1\wedge e_2\wedge e_3\wedge e_4\in\wedge^4\R^4$ is the volume form of $\R^4$.
Moreover, we are using a routine barycenter argument to globalize to $M$ the pointwise statement from each $T_pM$, see~\cite[Rem.~2.3]{strongpos}.
Manifolds (of any dimension) whose curvature operator $R$ admits a positive-definite or positive-semidefinite modification $(R+\omega)\in\Sym^2(\wedge^2 TM)$, $\omega\in\wedge^4 TM$, were systematically studied in~\cite{strongpos,strongnonneg,moduli-flags}. 

Finally, we need some elementary facts regarding self-duality in dimension $4$, see \cite[Ch.~13]{Besse} or \cite{DonaldsonKronheimer} for details. A closed oriented Riemannian $4$-manifold $(M,\g)$ also has a \emph{Hodge star}, defined as the self-adjoint operator $*\colon \wedge^2 TM\to\wedge^2 TM$ for which \eqref{eq:hodgestar} holds for all $\alpha\in\wedge^2TM$, where $\vol\in\wedge^4 TM$ is its volume form.
Since $*^2=\id$, there is an orthogonal direct sum splitting $\wedge^2 TM=\wedge^2_+ TM\oplus \wedge^2_- TM$, where $\wedge^2_\pm TM$ are rank $3$ vector bundles of \emph{self-dual} and $\emph{anti-self-dual}$ $2$-forms, corresponding to the $+1$ and $-1$ eigenspaces of $*$. A standard fact in Hodge~Theory is that there exists an analogous decomposition of the second cohomology
$H^2(M,\R)=\mathcal{H}^+\oplus\mathcal{H}^-$ 
as the direct sum of the spaces $\mathcal H^\pm$ of harmonic self-dual and harmonic anti-self-dual $2$-forms. Writing $b_\pm(M)=\dim \mathcal H^\pm$, we have that the second Betti number of $M$ is 
$b_2(M)=b_+(M)+b_-(M)$, the signature of $M$ is $\tau(M)=b_+(M)-b_-(M)$, and $M$ has \emph{indefinite intersection form} if and only if $b_+(M)>0$ and $b_-(M)>0$.
For instance, $S^2\times S^2$ and $\C P^2\#\overline{\C P^2}$ have indefinite intersection form, as $b_\pm(S^2\times S^2)=b_\pm(\C P^2\#\overline{\C P^2})=1$; while $S^4$ and $\C P^2$ have definite intersection form, as $b_+(S^4)=b_-(S^4)=0$, $b_+(\C P^2)=1$ and $b_-(\C P^2)=0$.

We now turn to the proof of Theorem~\ref{mainthm:dim4}, using the above  facts.

\begin{proof}[Proof of Theorem~\ref{mainthm:dim4}]
According to Proposition~\ref{prop:thorpe}, there exists a continuous function $f\colon M\to\R$ such that $(R+f\,*)\colon\wedge^2 TM\to\wedge^2 TM$ is positive-definite. We claim that $f$ has a zero. 
Since $b_\pm(M)>0$, there exist nonzero harmonic self-dual and anti-self-dual $2$-forms $\alpha_\pm\in\wedge^2_\pm TM$. In particular, by Theorem~\ref{mainthm:KRWedgeSymKN},
\begin{equation}\label{eq:alphaSD}
\langle \mathcal K(\, *,\wedge^2\R^4)\,\alpha_\pm, \alpha_\pm\rangle = 4 \,\langle *\, \alpha_\pm, \alpha_\pm\rangle = \pm 4\, \|\alpha_\pm\|^2.
\end{equation}
Applying the Bochner technique (see Subsection~\ref{subsec:Bochner}), we 
integrate over $M$ the Weitzenb\"ock formula \eqref{eq:weitzenbock} corresponding to the representation $\wedge^2\R^4$, obtaining
\begin{equation}\label{eq:BochnerTech}
\begin{aligned}
0 &= \int_M \langle \Delta \alpha,\alpha\rangle \vol \\
&= \int_M \|\nabla\alpha\|^2 +2 \langle \mathcal K(R,\wedge^2\R^4)\alpha, \alpha\rangle \,\vol \\
&= \int_M \|\nabla\alpha\|^2 +2 \langle \mathcal K(R+f \, *,\wedge^2\R^4)\alpha, \alpha\rangle
-2 \langle \mathcal K(f \, *,\wedge^2\R^4)\alpha, \alpha\rangle  \,\vol \\
&= \int_M \|\nabla\alpha\|^2 +2 \langle \mathcal K(R+f \, *,\wedge^2\R^4)\alpha, \alpha\rangle
\mp 8 f \|\alpha\|^2  \,\vol,
\end{aligned}
\end{equation}
for $\alpha=\alpha_\pm\in\wedge^2_\pm TM$, where the last equality follows from \eqref{eq:alphaSD}.
Since the operator $\mathcal K(R+f \, *,\wedge^2\R^4)$ is positive-definite, $\mp f>0$ would imply that $\alpha_\pm$ vanishes identically, a contradiction. Thus, $f$ has a zero.

The curvature operator $R$ of $(M,\g)$ is positive-definite along $f^{-1}(0)\subset M$, so the statement of the Theorem follows from the existence of $p_\pm\in M$ such that $f(p_-)<0<f(p_+)$ and $R_{p}\colon \wedge^2 T_{p} M\to\wedge^2 T_{p}M$ is not positive-definite for all $p$ in an open neighborhood of $p_\pm\in M$.

Suppose no such $p_+\in M$ exists, so that the curvature operator $R$ is positive-semidefinite at all $p\in M$ such that $f(p)>0$. Thus, the function $f_0=\min\{0,f\}\leq0$ is such that $R+f_0\,*$ is positive-semidefinite. Moreover, since $f$ has a zero, $R+f_0\,*$ is positive-definite at some point, hence on an open set. Setting $\alpha=\alpha_+$ in \eqref{eq:BochnerTech}, it follows that $\alpha_+$ vanishes on this open set, and hence globally on $M$, a contradiction. Thus, there exists $p_+\in M$ with $f(p_+)>0$ and $R_{p}$ not positive-definite for all $p$ in a neighborhood of $p_+$. The existence of $p_-\in M$ is completely analogous.
\end{proof}

\begin{remark}
A consequence of Theorem~\ref{mainthm:dim4} is that if a closed $4$-manifold with indefinite intersection form, such as $S^2\times S^2$ or $\C P^2\#\overline{\C P^2}$, admits a metric with $\sec>0$, then there exists a (nonempty) open subset in $M$ where the curvature operator is positive-definite. It is worth stressing that this conclusion is much weaker than Theorem~\ref{mainthm:dim4}, and indeed can always be achieved up to a small deformation. Namely, given any manifold $(M,\g)$ with $\sec\geq k$ and a neighborhood $U$ of $p\in M$, for all $\varepsilon>0$, there exists a Riemannian metric $\g_\varepsilon$ on $M$ with $\sec_{\g_\varepsilon}\geq k-\varepsilon$, that agrees with $\g$ on $M\setminus U$, and has constant curvature $k$ in a smaller neighborhood of $p\in U\subset M$, see Spindeler~\cite[Cor.~1.6]{spindeler}. On the other hand, Theorem~\ref{mainthm:dim4} ensures that \emph{any} metric with $\sec>0$ on $4$-manifolds as above has positive-definite curvature operator on a subset whose complement has at least two connected components.
\end{remark}

As explained in Subsection~\ref{subsec:Bochner}, the Bochner technique often yields \emph{rigidity} results under curvature assumptions that are not strict. This is precisely the case when relaxing the condition $\sec>0$ to $\sec\geq0$ in the above proof, leading to Theorem~\ref{mainthm:dim4rigidity}.

\begin{proof}[Proof of Theorem~\ref{mainthm:dim4rigidity}]
According to Proposition~\ref{prop:thorpe}, there exists a continuous function $f\colon M\to\R$ such that $R+f \, *$ is a positive-semidefinite operator. If the analogous situation to that of Theorem~\ref{mainthm:dim4} holds, that is, there exist $p_\pm\in M$ such that $f(p_-)<0<f(p_+)$ and $R_{p_\pm}$ is not positive-semidefinite, then (i) holds.

Thus, assume that either for all $p\in M$ such that $f(p)<0$ then $R_p$ is positive-semidefinite, or for all $p\in M$ such that $f(p)>0$ then $R_p$ is positive-semidefinite. Since both cases are analogous, suppose the latter holds. Setting $f_0:=\min\{0,f\}$, this implies that $R+f_0\, *$ is positive-semidefinite. Since $b_+(M)>0$, there exists a nonzero harmonic self-dual $2$-form $\alpha_+\in\wedge^2_+TM$. Then, by \eqref{eq:BochnerTech}, it follows that $\alpha_+$ is parallel and hence $\|\alpha_+\|$ is a positive constant. Using \eqref{eq:BochnerTech} once more, we conclude that $f_0$ vanishes identically, so $R\colon \wedge^2 TM\to\wedge^2 TM$ is positive-semidefinite.

Since $b_-(M)>0$, there exists a nonzero harmonic anti-self-dual $2$-form $\alpha_-\in\wedge^2_- TM$, which must be parallel by the Weitzenb\"ock formula. 
Thus, $\|\alpha_-\|$ is a positive constant, so we may assume $\|\alpha_-\|=\|\alpha_+\|$. 
We claim that the parallel $2$-form $\alpha=\alpha_+ + \alpha_-$ is \emph{pointwise decomposable}, that is, for all $p\in M$ there are $v,w\in T_pM$ such that $\alpha_p=v\wedge w$. Indeed,
\begin{equation*}
\begin{aligned}
\alpha\wedge\alpha &= \alpha_+ \wedge \alpha_+ + 2\,\alpha_+\wedge\alpha_- +\alpha_-\wedge\alpha_- \\
&=\alpha_+ \wedge * \,\alpha_+ - 2\,\alpha_+\wedge *\, \alpha_- -\alpha_-\wedge *\, \alpha_- \\
&= \big(\|\alpha_+\|^2-2\langle \alpha_+,\alpha_- \rangle-\|\alpha_-\|^2\big)\,\vol\\
&= 0,
\end{aligned}
\end{equation*}
so the Pl\"ucker relations are satisfied. Thus, $\alpha$ is a parallel field of $2$-planes on the simply-connected manifold $M$, which hence splits isometrically as a product $(M,\g)=\big(M_1\times M_2,\g_1\oplus\g_2\big)$ of surfaces $(M_1,\g_1)$ and $(M_2,\g_2)$ by the de Rham Splitting Theorem. Since $\sec\geq0$ and $M$ is simply-connected, it follows that $(M_i,\g_i)$ are isometric to $(S^2,\g_i)$ where $\g_i$ have $\sec\geq0$, that is, (ii) holds.
\end{proof}

\end{document}